\providecommand{\U}[1]{\protect\rule{.1in}{.1in}}
\newtheorem{theorem}{Theorem}[section]
\newtheorem{lemma}[theorem]{Lemma}
\newtheorem{proposition}[theorem]{Proposition}
\newtheorem{remark}[theorem]{Remark}
\newtheorem{thm}{Theorem}
\newtheorem{coro}[thm]{Corollary}
\newcommand{\bu}{{\bar u}}
\newcommand{\bv}{{\bar v}}
\newcommand{\bm}{{\bar{M}}}
\newcommand{\ricc}{\operatorname{Ric}}
\newcommand{\mricc}{{}^M \operatorname{Ric}}
\newcommand{\rr}{\mathbb{R}}
\newcommand{\ttt}{\mathbb{T}}
\newcommand{\Hess}{\operatorname{Hess}}
\newcommand{\nsect}{{}^N \operatorname{Sect}}
\newcommand{\vol}{\operatorname{Vol}}
\newcommand{\tu}{\tilde{u}}
\newcommand{\dive}{\operatorname{div}}
\newcommand{\mmetr}[2]{g_{M}\left( #1,#2\right)}
\newcommand{\nmetr}[2]{g_{N}\left( #1,#2\right)}
\newcommand{\hsmetr}[2]{\left\langle #1,#2\right\rangle_{HS}}
\newcommand{\Capp}[1]{\operatorname{Cap}_{#1}}
\begin{document}

\title{Topology of steady and expanding gradient Ricci solitons via $f$-harmonic maps}
\date{\today}
\author{Michele Rimoldi}
\address{Dipartimento di Scienza e Alta Tecnologia,
Universit\`a degli Studi dell'Insubria, \\
via Valleggio 11, 
I-22100 Como, ITALY}
\email{michele.rimoldi@gmail.com}

\author{Giona Veronelli}
\address{
Universit\'e Paris 13, Sorbonne Paris Cit\'e, LAGA, CNRS ( UMR 7539)\\
99, avenue Jean-Baptiste Cl\'ement F-93430 Villetaneuse - FRANCE }
\email{giona.veronelli@gmail.com}

\subjclass[2010]{53C43, 53C21}
\keywords{$f$-harmonic maps, smooth metric measure spaces, gradient Ricci solitons}

\begin{abstract}In this paper we give some results on the topology of manifolds with $\infty$--Bakry--\'Emery Ricci tensor bounded below, and in particular of steady and expanding gradient Ricci solitons. To this aim we clarify and further develop the theory of $f$--harmonic maps from non--compact manifolds into non--positively curved manifolds. Notably, we prove existence and vanishing results which generalize to the weighted setting part of Schoen and Yau's theory of harmonic maps.
\end{abstract}
\maketitle

\section{Introduction and main results}

Let $(M^{m},g_{M})$ and $(N^n,g_{N})$ be complete Riemannian manifolds, $\dim M=m\geq 2$, $\dim N=n$. Let $f:M\to\rr$ be a smooth function. A map $u:M\to N$ is said to be (weakly) $f$--harmonic if $u|_{\Omega}$ is a critical point of the $f$--energy
\[
E_f(u)=\frac12\int_{\Omega}e^{-f}|du|_{HS}^2 dV_M
\]
for every compact domain $\Omega\in M$. Here $|\cdot|_{HS}$ denotes the Hilbert--Schmidt norm on the set $T^{\ast}M\otimes u^{-1}TN$ of the vector--valued $1$--forms along the map $u$ and $dV_M$ stands for the canonical Riemannian volume form on $M$. When $u$ is $C^2$--regular, the Euler--Lagrange equation for the energy functional $E_f$ is the $f$--harmonic maps equation \cite{L,Co}
\[
\tau_fu:= e^{f}\dive (e^{-f}du) = \tau u - i_{\nabla f}du = 0,
\]
where $\tau u = \dive du$ is the standard tension field of $u$, so that $\tau_f u$ is naturally named $f$--tension field of $u$. Here $i$ denotes the interior product on $1$--forms, i.e. $i_{\nabla f}du = du (\nabla f)$, while $-\dive$ stands for the formal adjoint of the
exterior differential $d$ with respect to the standard $L^{2}$ inner product on vector--valued $1$--forms. The study of $f$--harmonic maps began with A. Lichnerowicz in 1969 \cite{L} and J. Eells and L. Lemaire in 1977, \cite{EL-Bull}, but apparently this subject has been very poorly investigated later. Let us just recall the recent works of N. Course \cite{Co,Co2}, especially about $f$--harmonic flow on surfaces, and Y.--L. Ou \cite{Ou} about $f$--harmonic morphisms. 
A more general class of maps, named pseudo-harmonic maps, has been analyzed by G. Kokarev in \cite{Ko-London}. After the first drafting of our work, two new papers have appeared. In the first, by G. Wang and D. Xu, \cite{WX-IJM}, there are proved some vanishing results which partially recover our Theorem \ref{th_main}. In the second one, Q. Chen, J. Jost and H. Qiu presented some connected results on $V$--harmonic maps, \cite{CJQ-AGAG}. Namely, the authors considered there solutions to the $V$--harmonic maps equation $\tau_Vu:= \tau u - i_{V}du = 0$, where $V$ is a vector field non--necessarily of gradient type.

Note that the $f$--harmonicity of a map $u$ defined on a Riemannian manifold $M$ is equivalent to the harmonicity of $u$ on some related manifolds, see Proposition \ref{prop_conf} and Proposition \ref{prop_warp} below. For instance, if $m\geq 3$ then $u:\left(M^m,g_{M}\right)\to N$ is $f$--harmonic if and only if $u:\left(M^m,e^{-\frac{2f}{m-2}}g_{M}\right)\to N$ is harmonic.  Nevertheless, the interaction of $f$--harmonicity with curvature conditions looks promising and justifies the study of $f$-harmonic maps in order to deduce information on smooth metric measure spaces and gradient Ricci solitons. Some first geometric consequences of this approach will be pointed out in the Section \ref{main}, where new interesting results on the topology of steady and expanding gradient Ricci solitons are deduced. For this reason, notation used in this paper is that of smooth metric measure spaces, e.g. \cite{WW-JDG,WW2} and not the one introduced so far for $f$--harmonic maps, where often $e^{-f}$ is replaced by $f$ \cite{L,Co,Ou}.

A smooth metric measure space, also known in the literature as a weighted manifold, is a Riemannian manifold $\left(M^m,g_{M}\right)$ endowed with a weighted volume form $e^{-f}dV_M$, for some smooth function $f:M\to \rr$.
Associated to a smooth metric measure space $(M^m, g_{M}, e^{-f}dV_M)$ there is also a natural divergence form second order diffusion operator: the $f$--Laplacian. This is defined on $u\in C^2(M)$ by $\Delta_fu=e^{f}\dive(e^{-f}\nabla u)=\Delta u-\mmetr{\nabla u}{\nabla f}$ and we can note that, for real--valued functions, $\Delta_fu=\tau_fu$. A natural question that arises in the setting of smooth metric measure spaces is what is the right concept of curvature on these spaces. Actually there is not a canonical choice. Good choices are those that reveal interplays with metric and topological properties of the space, see e.g. \cite{Mo-Kodai, WW-JDG, WW2}. We are interested in the $\infty$--Bakry--\'Emery Ricci tensor ${}^M\ricc_f={}^M\ricc+\Hess f$, which was first introduced by A. Lichnerowicz  in \cite{Li} and later by D. Bakry and M. \'Emery in \cite{BE}.
Recently it has been found that this curvature tensor is strictly related with geometric objects whose importance is outstanding in mathemathics. Imposing the constancy of ${}^{M}\ricc_f$, one introduces on the manifold an additional structure which goes under the name of gradient Ricci soliton structure. 
Namely, recall that, given a Riemannian manifold $(M, g_{M})$, a Ricci soliton structure on $M$ is the choice of a smooth vector field $X$ (if any) satisfying the soliton equation $\ricc+\frac{1}{2}\mathcal{L}_Xg_{M}=\lambda g_{M}$, for some $\lambda\in\mathbb{R}$. The Ricci soliton $(M, g_{M}, X)$ is said to be shrinking, steady or expanding according to whether $\lambda>0$, $\lambda=0$ or $\lambda<0$. In the special case where $X=\nabla f$ for some smooth function $f:M\to\mathbb{R}$, we have that 
$
\ricc_f=g_{M},
$
and we say that $(M, g_{M}, \nabla f)$ is a gradient Ricci soliton with potential $f$.
The importance of gradient Ricci solitons is due the fact that they correspond to ``self--similar'' solutions to Hamilton's Ricci flow and often arise as limits of dilations of singularities developed along the flow.

Mimicking the theory developed by J. Eells and J. H. Sampson \cite{ES} and P. Hartman \cite{Ha}, in the seminal paper \cite{L} Lichnerowicz use $f$--harmonic maps to deduce topological information on compact $(m\geq3)$--dimensional smooth metric measure spaces satisfying ${}^M\ricc_f\geq 0$. In this paper, following the theory for harmonic maps developed by R. Schoen and S.--T. Yau \cite{SY-CH}, we extend the approach of Lichnerowicz in order to study the topology of complete manifolds of dimension $m\geq 2$ with non--negative Bakry--\'Emery Ricci tensor ${}^M\ricc_f\geq 0$ and of expanding gradient Ricci solitons with a suitable control on the scalar curvature. This problem is particularly interesting since very poor information on the topology of this class of manifolds is known, see Section \ref{main}.

Concerning $f$--harmonic maps, we obtain the following general result.

\begin{thm}\label{th_main}
Let $M$ be a complete non--compact Riemannian manifold and $N$ a compact Riemannian manifold with $\nsect\leq0$. Let $f\in C^\infty(M)$ and consider a continuous map $u:M\to N$ with finite $f$--energy $E_f(u)< +\infty$.
\begin{itemize}
\item[(\textbf{I})] Assume that $\mricc_f\geq 0$ and that at least one of the following assumption is satisfied
\begin{itemize}
\item[(a)] there exists a constant $C>0$ such that $|f|\leq C$;
\item[(b)] $f$ is convex and the set of its critical points is unbounded;
\item[(c)] $\vol_f(M):=\int_Me^{-f}dV_M=+\infty$;
\item[(d)] there is a point $q_0\in M$ such that $\mricc_f|_{q_0}>0$;
\item[(e)] there is a point $q_1\in M$ such that $\mricc(X,X)|_{q_1}\neq 0$ for all $0\neq X\in T_{q_1}M$.
\end{itemize}
Then $u$ is homotopic to a constant. 
\item[(\textbf{II})] Assume that $\mricc_f\geq -k^2(x)$ for some $0\leq k\in C^\infty(M)$, $k\not\equiv 0$, such that
\begin{itemize}
\item[(f)] $\lambda_1(-\Delta_f-Hk^2)\geq 0$ for some $H>1$.
\end{itemize}
Then $u$ is homotopic to a constant.
\item[(\textbf{III})] If $\mricc_f\geq 0$ and $\nsect<0$, then $u$ is homotopic either to a constant or to a totally geodesic map whose image is contained in a geodesic of $N$.
\end{itemize}
\end{thm}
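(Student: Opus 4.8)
The plan is to adapt the classical Eells--Sampson/Schoen--Yau heat-flow and Bochner technique to the $f$-weighted setting. The first step is to produce, within the homotopy class of $u$, an $f$-harmonic representative. Since $N$ is compact with $\nsect\le 0$, one runs the $f$-harmonic map heat flow $\partial_t u_t = \tau_f u_t$ with initial data a smooth map homotopic to $u$; by the results quoted in the introduction (or by noting that $f$-harmonicity of $u_t$ is the harmonicity of $u_t$ for the conformally changed metric $e^{-2f/(m-2)}g_M$, Proposition~\ref{prop_conf}, when $m\ge 3$, and treating $m=2$ separately via Proposition~\ref{prop_warp}) the flow exists for all time and, using the finiteness of the $f$-energy together with the nonpositive target curvature, subconverges to an $f$-harmonic map $u_\infty$ homotopic to $u$ with $E_f(u_\infty)\le E_f(u)<+\infty$. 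The key point to check here is that the energy-decay estimate survives the weight: this is exactly where $E_f(u)<+\infty$ enters.

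The second step is the weighted Bochner formula. For an $f$-harmonic map $v:M\to N$ one has, writing $e(v)=\tfrac12|dv|_{HS}^2$,
\[
\Delta_f e(v) = |\nabla dv|_{HS}^2 + \langle dv\,\mricc_f, dv\rangle - \sum_{i,j}\nsect\bigl(dv(e_i),dv(e_j)\bigr)|dv(e_i)\wedge dv(e_j)|^2,
\]
where the Ricci term now carries the Bakry--\'Emery tensor because the $\Hess f$ contribution produced by the extra first-order term $-i_{\nabla f}dv$ is precisely what upgrades $\mricc$ to $\mricc_f$. Under $\mricc_f\ge 0$ and $\nsect\le 0$ this gives $\Delta_f e(v)\ge 0$, i.e. $e(v)$ is $f$-subharmonic, and moreover $|\nabla dv|_{HS}\equiv 0$ wherever $\mricc_f\equiv 0$ and the curvature term vanishes. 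The third step is a weighted Liouville/Yau-type argument: a nonnegative $f$-subharmonic function on a complete weighted manifold that has, say, finite weighted $L^1$ Dirichlet-type control — which we extract from $E_f(v)<+\infty$ together with a Caccioppoli estimate coming from $f$-subharmonicity — must be constant. Combining this with the Bochner identity forces $\nabla dv\equiv 0$, so $v$ is totally geodesic with $e(v)$ constant; finiteness of $f$-energy combined with each of the side conditions (a)--(f) then forces that constant to be zero (hence $v$, and so $u$, is homotopic to a constant), \emph{except} in the borderline situation of part (III).

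**The three regimes.** For part (I): under $\mricc_f\ge 0$ one gets $v$ totally geodesic; conditions (a), (c), (f) each guarantee that the constant energy density must vanish (under (a) or (c), $e(v)\equiv c>0$ would contradict $\vol_f(M)=+\infty$ or the boundedness of $f$ against $E_f(v)<\infty$; under (f) one tests the stability/eigenvalue inequality against $\sqrt{e(v)}$); conditions (b), (d), (e) instead exploit the Bochner identity directly — if $c>0$ the parallel $1$-form $dv$ would be nonzero at $q_0$ or $q_1$, contradicting $\mricc_f|_{q_0}>0$, respectively the nondegeneracy of $\mricc$ at $q_1$ (note $\Hess f$ enters here), while under (b) convexity of $f$ with an unbounded critical set produces enough $f$-parallel directions to again kill the density. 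For part (II): the hypothesis (f), $\lambda_1(-\Delta_f - Hk^2)\ge 0$ with $H>1$, is designed to absorb the now-negative term $\langle dv\,\mricc_f,dv\rangle\ge -k^2|dv|^2$; one derives a refined Kato inequality (the target being nonpositively curved, $|\nabla|dv||^2\le|\nabla dv|^2$ can be improved by a factor depending on $\mathrm{rank}(dv)$, gaining the needed slack to beat $H$), plugs $|dv|$ into the eigenvalue inequality, and concludes $|dv|\equiv 0$. For part (III): with $\nsect<0$ strictly, the curvature term in the Bochner formula vanishes only if the rank of $dv$ is everywhere $\le 1$; since we already know $v$ is totally geodesic with $\nabla dv\equiv 0$, its image is a complete geodesic of $N$ (or a point), which is the stated dichotomy.

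**Main obstacle.** The delicate step is the first one, producing the $f$-harmonic representative and pushing the convergence of the weighted heat flow through with only $E_f(u)<+\infty$ and no completeness-type control on $f$ or on the weighted geometry; the conformal trick reduces this to Schoen--Yau's existence theorem but one must verify that the conformally changed metric is still complete (or argue intrinsically). Everything after that — the weighted Bochner formula, the weighted Liouville theorem via Caccioppoli, and the case analysis (a)--(f) — is a systematic adaptation of the Riemannian arguments, with $\Delta_f$, $\mricc_f$ and $\vol_f$ replacing their unweighted counterparts throughout.
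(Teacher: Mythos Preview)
Your overall architecture---produce an $f$-harmonic representative, apply a weighted Bochner formula, run a Caccioppoli/cutoff argument to force $|dv|$ constant and $Ddv\equiv 0$, then do a case analysis---matches the paper's. Two points deserve comment.

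\textbf{Existence.} The paper does not use heat flow. For $m\ge 3$ it observes (as you note) that $f$-harmonicity on $(M,g_M)$ is harmonicity on $(M,e^{-2f/(m-2)}g_M)$ and invokes Burstall's direct minimization result, explicitly remarking that Burstall's proof does not require completeness of the domain metric---so your worry about completeness of the conformal metric is disposed of that way. For general $m\ge 2$ (in particular $m=2$, where the conformal trick fails) the paper passes to the warped product $\bar M=M\times_{e^{-f}}\mathbb T^1$, shows that $f$-harmonicity on $M$ equals harmonicity on $\bar M$ for $t$-independent maps, applies Burstall on $\bar M$, and then uses a Fubini/averaging argument over the circle fiber to recover a $t$-independent energy minimizer. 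Your heat-flow sketch could perhaps be made to work, but the paper's route sidesteps the convergence and completeness issues entirely.

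\textbf{Part (II): the refined Kato step is a genuine error.} You propose to ``derive a refined Kato inequality \dots gaining the needed slack to beat $H$''. The paper explicitly shows (with a concrete counterexample: $v(x,y,z)=\int_0^x e^{-t^2/2}\,dt$ and $f=-x^2/2$ on $\mathbb R^3$) that \emph{no} refined Kato inequality $|Ddv|^2-|\nabla|dv||^2\ge K|\nabla|dv||^2$ with $K>0$ holds for general $f$-harmonic maps. The mechanism is the opposite of what you write: the hypothesis $H>1$ (rather than merely $H\ge 1$) is precisely what supplies the slack, so that the \emph{standard} Kato inequality suffices. Concretely, testing the spectral inequality with $\varphi=\rho\phi$ (where $\phi=|dv|$) and combining with $\phi\Delta_f\phi+k^2\phi^2\ge 0$ yields a coefficient $1-\epsilon-H^{-1}(1+\delta)$ in front of $\int\rho^2|\nabla\phi|^2 e^{-f}$, which is positive for small $\epsilon,\delta$ exactly because $H>1$. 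Your proposed refined-Kato route would fail here.

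Two smaller points. In your part (I) you list ``(f)'' among the conditions forcing $e(v)\equiv 0$; condition (f) belongs only to part (II). And for (a) and (b) the paper's mechanism is not quite what you sketch: one shows that $e(v)\equiv C>0$ forces $\vol_f(M)<\infty$, and then under (a) or (b) a Wei--Wylie theorem gives at least linear $f$-volume growth, a contradiction; your description of (b) via ``$f$-parallel directions'' does not correspond to the actual argument.
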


\begin{remark}\label{rmk_rai}{\rm 
By Rayleigh characterization, the spectral assumption 
\begin{equation}\label{f-spect}
\lambda_1(-\Delta_f-Hk^2)\geq 0
\end{equation} 
is equivalent to ask that 
\begin{equation}\label{ray}
\int_M|\nabla\varphi|^2e^{-f}dV_M-\int_MHk^2\varphi^2e^{-f}dV_M\geq 0
\end{equation}
for all $\varphi\in C^{\infty}_c(M)$. Then, in case $\mricc_f\geq 0$, we are not assuming \eqref{f-spect}, since it is trivially satisfied.

The non--weighted $k\not\equiv 0$ version of Theorem \ref{th_main} is due to \cite{PRS-JFA05}. See also \cite{PV}, where the lower bound on $H$ is improved, and Remark \ref{rmk_kato} below for a comment on the lower bound for $H$.
}\end{remark}

\begin{remark}{\rm
At least in case $k$ is bounded, the assumptions of Theorem \ref{th_main} can be weakened by assuming $H=H(x)$ to be a (nonconstant) smooth function $H(x)>1$. This improvement can be pretty useful in some particular situation (see Theorem \ref{th_exp} for an application and its proof in Section \ref{main} for an idea of the proof of this improvement.)
}\end{remark}

Theorem \ref{th_main} has the following topological implications.

\begin{coro}\label{coro_main}
Let $M^m$ be a complete non--compact $m$--dimensional Riemannian manifold. Let $D\subset M$ be a compact domain in $M$ with smooth, simply connected boundary. 
Then,
\begin{itemize}
\item[(i)] under the assumptions in (\textbf{I}) or (\textbf{II}) of Theorem \ref{th_main}, there is no non--trivial homomorphism of $\pi_1(D)$ into the fundamental group of a
compact manifold with non--positive sectional curvature;
\item[(ii)]if $\ricc_f\geq0$, each homomorphism of $\pi_1(D)$ into the fundamental group of a
compact manifold $N$ with strictly negative sectional curvature $\nsect<0$ is either trivial or maps all $\pi_1(D)$ into a cyclic subgroup of $\pi_1(N)$.
\end{itemize}
\end{coro}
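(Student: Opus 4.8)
The plan is to deduce both parts from Theorem \ref{th_main} by the standard Schoen--Yau device, realizing an abstract homomorphism $\rho\colon\pi_1(D)\to\pi_1(N)$ as the $\pi_1$--homomorphism induced by a continuous, finite $f$--energy map $u\colon M\to N$, using crucially that $N$ is aspherical and that $\partial D$ is simply connected.

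First I would record that, since $\nsect\le 0$, the Cartan--Hadamard theorem shows the universal cover of $N$ to be diffeomorphic to $\rr^n$, so $N$ is a $K(\pi_1(N),1)$ and $\pi_k(N)=0$ for all $k\ge 2$. Given $\rho\colon\pi_1(D)\to\pi_1(N)$ with $N$ compact (and $\nsect\le0$), and using that $D$ has the homotopy type of a finite CW complex, the Eilenberg--MacLane realization theorem yields a map $\phi\colon D\to N$ with $\phi_*=\rho$, which by smooth approximation we may take smooth. Since $\pi_1(\partial D)=0$, the restriction $\phi|_{\partial D}$ lifts to the contractible universal cover of $N$, hence is null--homotopic. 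Fixing a compact collar $\partial D\times[0,1]$ of $\partial D$ inside $\overline{M\setminus D}$, I would use a smooth null--homotopy of $\phi|_{\partial D}$ to define $u$ on this collar, with $u$ equal to the limiting constant on $\partial D\times\{1\}$, and then set $u$ equal to that same constant on all of $M\setminus\bigl(D\cup(\partial D\times[0,1))\bigr)$; together with $u|_D=\phi$ this produces a continuous (after smoothing, smooth) map $u\colon M\to N$ that is constant outside a compact set. In particular $du$ is bounded with compact support, so $E_f(u)<+\infty$ and $u$ is an admissible competitor for Theorem \ref{th_main}.

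Now I would invoke Theorem \ref{th_main}. Under the hypotheses of (\textbf{I}) or (\textbf{II}) the map $u$ is homotopic to a constant, hence $u_*=0$ on $\pi_1(M)$; writing $\iota\colon D\hookrightarrow M$ for the inclusion, the identity $\phi=u\circ\iota$ gives $\rho=\phi_*=u_*\circ\iota_*=0$, which is (i). For (ii), with $\nsect<0$, part (\textbf{III}) says $u$ is homotopic either to a constant --- and then $\rho=0$ as above --- or to a totally geodesic map factoring through a geodesic of $N$, i.e. $u\simeq\gamma\circ h$ with $\gamma$ a complete geodesic and $h\colon M\to\rr$ or $h\colon M\to\sss^1$ according to whether $\gamma$ is a line or a closed geodesic. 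In the first case $\pi_1(\rr)=0$ forces $u_*=0$; in the second $u_*(\pi_1(M))\subseteq\gamma_*(\pi_1(\sss^1))$, a cyclic subgroup of $\pi_1(N)$. In either sub--case $\rho(\pi_1(D))=u_*(\iota_*\pi_1(D))$ lies in a cyclic subgroup of $\pi_1(N)$, proving (ii).

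Given Theorem \ref{th_main}, this corollary carries essentially no analytic content and the main obstacle is purely topological bookkeeping: one must make sure the extended map $u$ is regular enough to be an admissible competitor (handled by taking $\phi$ and the null--homotopy smooth, so that $u$ is smooth and constant at infinity), and one must use the asphericity of $N$ correctly --- once to realize $\rho$ by a map on $D$, and once to see that the hypothesis $\pi_1(\partial D)=0$ renders $\phi|_{\partial D}$ null--homotopic, which is precisely what allows the global extension of finite $f$--energy. No input beyond Theorem \ref{th_main} is required.
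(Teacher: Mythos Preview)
Your construction of the finite $f$--energy extension $u$ and the deduction of part (i) follow the paper's argument essentially verbatim.

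For part (ii), however, there is a genuine gap. Theorem \ref{th_main} (\textbf{III}) only asserts that the $f$--harmonic representative $v$ is totally geodesic with image \emph{contained in} a geodesic $\gamma$ of $N$; it does not give you a factorization $v=\gamma\circ h$ with $h\colon M\to\rr$ or $h\colon M\to\sss^1$. That step is not automatic: in a compact negatively curved manifold a closed geodesic can have transverse self--intersections (a figure--eight on a hyperbolic surface), so $\gamma(\sss^1)$ need not be a submanifold, and a continuous map landing in $\gamma(\sss^1)$ does not in general lift through $\gamma$. One can rescue the factorization by lifting to universal covers and invoking that the stabilizer in $\pi_1(N)$ of a geodesic line in the Hadamard cover is infinite cyclic, but you have not done this, and your final sentence (``No input beyond Theorem \ref{th_main} is required'') is precisely where the argument is incomplete.

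The paper handles this point differently: it proves an auxiliary result (Theorem \ref{th_B(ii)}) which works directly with loops rather than attempting a global factorization. Given $v$ totally geodesic of rank one with $v(M)\subset\gamma$, one replaces each class $\mathfrak g\in\pi_1(M,x_0)$ by a geodesic loop $\nu$; then $v\circ\nu$ is a constant--speed geodesic arc in $N$ lying on $\gamma$, and an elementary uniqueness argument forces $dv(\dot\nu(1))=dv(\dot\nu(0))$, so $v\circ\nu$ is either constant or a power of the closed geodesic $\gamma$. This yields $v_\sharp(\pi_1(M))\le\langle[\gamma]\rangle$ without ever writing $v=\gamma\circ h$. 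Either route works, but some argument beyond the bare statement of Theorem \ref{th_main} is needed, and you should supply one.
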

\begin{remark}
\rm{Part (ii) of Corollary \ref{coro_main} was pointed out by Schoen and Yau in the non--weighted case \cite{SY-CH}.

Note that Corollary \ref{coro_main} (ii) and Corollary \ref{coro_main} (i) in the assumptions (\textbf{I}.d), (\textbf{I}.e) or (\textbf{II}) hold not asking $f$ to be neither bounded nor convex. This is a reason of interest in the approach we propose, since it permits to deal with cases for which the techniques introduced so far seem to be unapplicable. See Proposition \ref{th_knowntopo} below.
}\end{remark}

The $k\equiv0$ case of Corollary \ref{coro_main} directly applies to study the topology of complete steady gradient Ricci solitons, where few information is known, as discussed in Section \ref{main} below. 

\begin{coro}\label{coro_steady}
Let $(M^m, g_{M}, \nabla f)$ be a complete
non--trivial steady gradient Ricci soliton then the conclusion of
Corollary B (ii) holds. Moreover, if we further assume that one of the
conditions (b), (c), (e) is satisfied, then the conclusion of
Corollary B (i) also holds.
\end{coro}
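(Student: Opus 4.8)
The plan is to deduce Corollary~\ref{coro_steady} directly from Corollary~\ref{coro_main} (Corollary~B), after observing that a non-trivial complete steady gradient Ricci soliton automatically meets the ambient hypotheses there, namely non-compactness and $\mricc_f\ge 0$. By definition of steady soliton, $\mricc_f={}^M\ricc+\Hess f=\lambda g_M$ with $\lambda=0$, so in fact $\mricc_f\equiv 0$, and a fortiori $\mricc_f\ge 0$. For non-compactness I would invoke the classical fact that a compact steady gradient Ricci soliton is trivial (Ricci-flat with constant potential). If one prefers a self-contained argument: Hamilton's identity for a steady gradient soliton gives $R+\abs{\nabla f}^2\equiv c$ for a constant $c$; evaluating at a maximum of $f$ on a hypothetically compact $M$ (where $\nabla f=0$ and $\Delta f\le 0$, hence $R=-\Delta f\ge 0$) and at a minimum of $f$ (where $R=-\Delta f\le 0$) forces $c=0$; then $\Delta f=-R=\abs{\nabla f}^2\ge 0$ and $\int_M\Delta f\,dV_M=0$ yield $\nabla f\equiv 0$, whence $\mricc\equiv 0$, contradicting non-triviality. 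So $M$ is complete non-compact with $\mricc_f\equiv 0$.

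The remainder is then immediate. Fix a compact domain $D\subset M$ with smooth, simply connected boundary. Since $M$ is complete non-compact with $\mricc_f\ge 0$, Corollary~\ref{coro_main}(ii) applies verbatim and gives the first assertion: every homomorphism of $\pi_1(D)$ into the fundamental group of a compact manifold with strictly negative sectional curvature is trivial or has cyclic image. For the ``moreover'' part, assuming in addition one of (b), (c), (e) means the full hypothesis block (\textbf{I}) of Theorem~\ref{th_main} is satisfied ($\mricc_f\ge 0$ together with that supplementary condition), so Corollary~\ref{coro_main}(i) applies and gives the conclusion of Corollary~B(i): there is no non-trivial homomorphism of $\pi_1(D)$ into the fundamental group of a compact manifold with non-positive sectional curvature.

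A brief word on why only (b), (c), (e) are invoked among (a)--(e): condition (d) is never available, since $\mricc_f\equiv 0$; and condition (a), boundedness of $f$, generally fails, as the potential of a non-trivial steady soliton is typically unbounded (for instance on Hamilton's cigar). Thus (b), (c), (e) are the natural supplementary hypotheses.

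The only non-formal ingredient in this scheme is the non-compactness claim, resting on elementary steady-soliton structure theory (Hamilton's identity plus the maximum principle); I expect it to be the main, though mild, obstacle, while everything else is a straight application of the already-established Theorem~\ref{th_main} and Corollary~\ref{coro_main}.
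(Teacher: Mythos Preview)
Your proposal is correct and follows the same approach the paper intends: the corollary is stated there without proof, as an immediate consequence of Corollary~\ref{coro_main} once one notes that a steady soliton has $\mricc_f\equiv 0$. Your added verification of non-compactness (via Hamilton's identity and the maximum principle) is sound and fills in a hypothesis the paper does not explicitly check; the only minor refinement is that, as recorded in Remark~\ref{Wu..}, condition~(a) is not merely ``typically'' unavailable but \emph{never} holds for non-trivial steady solitons, by the results of Munteanu--Sesum and Wu cited there.
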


\begin{remark}\label{Wu..}
\rm{Note that assumption (a) of Corollary \ref{coro_main} cannot hold for non--trivial (i.e. with $f$ non-constant) steady gradient Ricci solitons. Indeed, it has been shown by O. Munteanu and N. Sesum, \cite{MuntSes}, and indipendently by P. Wu, \cite{Wu}, that the potential function $f$ must be unbounded in this case. Moreover it would be interesting to understand whether assumption (c) is always satisfied by steady gradient Ricci solitons.}
\end{remark}

The flexibility given by the spectral assumption permits to deduce information also concerning expanding gradient Ricci solitons. In particular, according to an idea in \cite{MW2}, there exist situations in which the spectral assumption is implied by more geometrical curvature condition.

\begin{thm}\label{th_exp}
Let $(M^m, g_{M}, \nabla f)$ be a complete non--trivial expanding gradient Ricci soliton with scalar curvature ${}^MS > (m-1)\lambda$.  Let $D\subset M$ be a compact domain in $M$ with smooth, simply connected boundary. Then, there is no non--trivial homomorphism of $\pi_1(D)$ into the fundamental group of a compact manifold with non--positive sectional curvature. 
\end{thm}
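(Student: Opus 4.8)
The plan is to deduce Theorem \ref{th_exp} from Corollary \ref{coro_main}(i) in case (\textbf{II}) of Theorem \ref{th_main}: the curvature normalization is automatic for an expander, and the only real point is to check the spectral hypothesis (f), which I will do in the slightly strengthened form allowing a nonconstant smooth weight $H=H(x)>1$ mentioned in the remark following Theorem \ref{th_main}.

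Write the soliton equation as $\mricc_f=\lambda g_M$ with $\lambda<0$. Then $\mricc_f=-(-\lambda)g_M\geq -k^2$ with $k:=\sqrt{-\lambda}$ a nonzero constant (in particular bounded), so the hypothesis $\mricc_f\geq -k^2(x)$ holds. Also, a compact steady or expanding gradient Ricci soliton is Einstein, hence trivial, so non--triviality forces $M$ to be non--compact, as Theorem \ref{th_main} requires. Tracing the soliton equation gives $\Delta f=m\lambda-{}^MS$, and a one--line computation then yields $\Delta_f e^f=e^f\Delta f=(m\lambda-{}^MS)e^f$. Set $H(x):=\dfrac{{}^MS-m\lambda}{-\lambda}$; this is smooth, and the assumption ${}^MS>(m-1)\lambda$ gives ${}^MS-m\lambda>-\lambda>0$, i.e. $H(x)>1$ everywhere. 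Consequently $\bigl(-\Delta_f-H(x)k^2\bigr)e^f=0$, so $e^f$ is a smooth positive solution of that operator. By the weighted form of the Fischer-Colbrie--Schoen principle ``a positive supersolution makes the operator nonnegative'' --- concretely, plugging $\varphi=e^f\psi$ into \eqref{ray} and integrating by parts gives $\int_M(|\nabla\varphi|^2-Hk^2\varphi^2)e^{-f}dV_M=\int_M e^{2f}|\nabla\psi|^2e^{-f}dV_M\geq 0$ for all $\psi\in C^\infty_c(M)$, and $\psi\mapsto e^f\psi$ maps $C^\infty_c(M)$ onto itself --- one concludes $\lambda_1(-\Delta_f-H(x)k^2)\geq 0$. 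Thus (f) holds and Corollary \ref{coro_main}(i) gives the statement.

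The step that needs the extra care is precisely the use of a \emph{nonconstant} $H$: the bound ${}^MS>(m-1)\lambda$ is only pointwise, and the scalar curvature may decay to $(m-1)\lambda$ at infinity, so there need not be a constant $H>1$ with $\lambda_1(-\Delta_f-Hk^2)\geq 0$. One therefore has to invoke the refinement of Theorem \ref{th_main}(\textbf{II}) that permits $H=H(x)$ smooth and $>1$ --- legitimate here since $k$ is bounded --- whose proof is the Kato--type argument of the constant case, with the pointwise gap $H(x)-1>0$ taking over the role of the constant margin in closing the differential inequality for $|du|$; this is sketched in Section \ref{main}. Apart from that, everything is either the soliton bookkeeping above or the machinery already contained in Theorem \ref{th_main} and Corollary \ref{coro_main}.
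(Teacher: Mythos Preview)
Your proof is correct and follows the same overall route as the paper: verify $\mricc_f\geq -k^2$ with $k^2=-\lambda$, establish the spectral inequality $\lambda_1(-\Delta_f-H(x)k^2)\geq 0$ with $H(x)=\dfrac{{}^MS-m\lambda}{-\lambda}>1$, and then apply Corollary~\ref{coro_main}(i) in the nonconstant-$H$ refinement of case~(\textbf{II}).

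The one genuine difference is how the spectral inequality is obtained. The paper quotes it as Lemma~\ref{LemmaMuntWang} from Munteanu--Wang. You instead observe directly that $\Delta_f e^f=e^f\Delta f=(m\lambda-{}^MS)e^f$, so $e^f>0$ solves $(-\Delta_f-H(x)k^2)u=0$, and conclude via the ground-state substitution $\varphi=e^f\psi$. This is a clean, self-contained alternative that avoids the external citation; it is worth keeping.

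One point of precision concerns the nonconstant-$H$ step. Your description (``the pointwise gap $H(x)-1>0$ taking over the role of the constant margin'') is not quite what the paper does, and a naive attempt along those lines would stall: in the proof of Theorem~\ref{f-SY_spectr} one needs $1-\epsilon-H^{-1}(1+\delta)>0$, which fails if $\inf_M H(x)=1$. The paper's actual mechanism, sketched in Section~\ref{main}, is different: one fixes the finite-$f$-energy $f$-harmonic map $v$, uses the strict pointwise inequality $H(x)>1$ together with $\int_M|dv|^2e^{-f}<\infty$ to extract a \emph{constant} $H'>1$ satisfying $\int_M({}^MS-m\lambda)|dv|^2e^{-f}\geq -H'\lambda\int_M|dv|^2e^{-f}$, and then by monotone convergence gets a constant $1<H<H'$ that works for the specific test functions $\rho_R|dv|$ appearing in the Caccioppoli estimate. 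Since you explicitly defer to Section~\ref{main} for this step, your argument is complete as written; but the one-line summary should be corrected.
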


The paper is organized as follows. In Section \ref{sec_exis} we discuss the relations between the weighted and the non--weighted setting, and in particular under which conditions $f$--harmonic maps on a manifold $M$ can be interpreted as harmonic maps on some related manifold. This permits to deduce the existence of a smooth $f$--harmonic representative in the homotopy class of a given finite $f$--energy map, provided $N$ is compact and $\nsect\leq 0$. Moreover, some information about the uniqueness of the representative is given. In Section \ref{sec_vanishing} we prove a vanishing result for finite $f$--energy $f$--harmonic maps, in the special (easier) case $k\equiv 0$. Then, in Section \ref{SpectSect} we discuss the spectral assumption \eqref{f-spect} and provide the changes needed to complete the proof of Theorem \ref{th_main}. Finally, in Section \ref{main} we deduce geometrical applications to smooth metric measure spaces and gradient Ricci solitons.

\section{Existence and relations between the weighted and the non--weighted setting}\label{sec_exis}

This section aims to give the proof of the following existence result. To obtain this, we will formalize some useful links between harmonic and $f$--harmonic maps.

\begin{theorem}\label{f-burstall}
Let $M^m$ and $N^n$ be Riemannian manifolds, $m\geq 2$. Assume $N$ is compact and $\nsect\leq 0$. Then any homotopy class of maps from $M$ into $N$ containing a continuous map of finite $f$--energy contains a smooth $f$--harmonic map minimizing the $f$--energy in the
homotopy class.
\end{theorem}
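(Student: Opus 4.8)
The plan is to reduce the $f$-harmonic existence problem to the classical harmonic map existence theory (Eells--Sampson, Hartman, Burstall, Schoen--Yau) by means of a conformal change of metric, exploiting the equivalence recorded in the paper's own Propositions on conformal and warped-product reinterpretations of $f$-harmonicity. First I would treat the case $m \geq 3$: the key observation, quoted in the introduction, is that $u : (M^m, g_M) \to N$ is $f$-harmonic if and only if $u : (M^m, \tilde g_M) \to N$ is harmonic, where $\tilde g_M = e^{-\frac{2f}{m-2}} g_M$. Moreover this conformal change is precisely the one under which the energy integrand transforms correctly: in dimension $m$, $|du|^2_{HS, \tilde g_M}\, dV_{\tilde g_M}$ picks up a factor $e^{-\frac{(m-2)f}{m-2}}\cdot e^{(\text{something})f}$, and one checks that $E(u; \tilde g_M) = E_f(u; g_M)$ up to the relevant normalization, so that finite $f$-energy with respect to $g_M$ is the same as finite (ordinary) energy with respect to $\tilde g_M$, and minimizers correspond to minimizers. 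Hence a continuous finite-$f$-energy map $u : M \to N$ is, after the conformal change, a continuous finite-energy map into the compact nonpositively curved target $(N, g_N)$.

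Next I would invoke the classical theory for harmonic maps into compact manifolds of nonpositive sectional curvature: by the work of Eells--Sampson combined with Hartman (or in the noncompact-domain finite-energy setting, the results of Schoen--Yau \cite{SY-CH} and Burstall), every homotopy class of maps from a complete Riemannian manifold into such a target that contains a continuous map of finite energy contains a smooth harmonic map which minimizes the energy within the homotopy class. Applying this to $(M, \tilde g_M) \to (N, g_N)$ yields a smooth harmonic representative $\tilde u$ minimizing $E(\,\cdot\,; \tilde g_M)$ in the homotopy class of $u$. Transporting back through the conformal equivalence, $\tilde u$ is a smooth $f$-harmonic map for $(M, g_M)$, and because the energy functionals agree up to a positive constant it minimizes $E_f$ in the homotopy class. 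Smoothness is preserved since the conformal factor $e^{-\frac{2f}{m-2}}$ is smooth and positive, so the metrics $g_M$ and $\tilde g_M$ are smoothly equivalent and harmonic-map regularity transfers verbatim.

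For the remaining case $m = 2$, the conformal trick degenerates (the exponent $\frac{2}{m-2}$ blows up), so I would instead use the warped-product reinterpretation from Proposition \ref{prop_warp}: realize the $f$-harmonic map $u : (M^2, g_M) \to N$ as the restriction of a harmonic map from a suitable warped product $M \times_{\varphi} \mathbb{S}^1$ (or $M \times_\varphi \mathbb{R}^p$ for appropriate $p$, with warping function $\varphi = e^{-f/p}$) into $N$, the point being that a map depending only on the $M$-factor is harmonic on the warped product exactly when it is $f$-harmonic on $M$, and its energy on a fundamental domain equals (a multiple of) its $f$-energy. One then applies the classical existence theory on the higher-dimensional manifold $M \times_\varphi \mathbb{S}^1$ and checks that the resulting energy-minimizing harmonic representative can be taken invariant under the $\mathbb{S}^1$-action — this follows from uniqueness/symmetrization properties of harmonic maps into nonpositively curved targets (Hartman) — hence descends to an $f$-harmonic minimizer on $M$.

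The main obstacle I expect is the $m=2$ argument: one must justify carefully that the minimizing harmonic map on the warped product can be chosen $\mathbb{S}^1$-invariant (equivalently, that symmetrizing/averaging over the circle does not increase energy and preserves the homotopy class), and that finite $f$-energy on $M$ translates to the correct finiteness condition on the warped product needed to apply Schoen--Yau--type existence — warped products over noncompact bases with non-constant warping require some care regarding completeness and the behavior of the warping function $e^{-f/p}$. A secondary technical point, even when $m \geq 3$, is verifying that the homotopy classes of maps $M \to N$ are unchanged by the conformal change of domain metric (this is immediate, as homotopy is a topological notion independent of the metric) and that ``continuous of finite $f$-energy'' is genuinely equivalent to ``continuous of finite energy for $\tilde g_M$'' even without any a priori bound on $f$ — this equivalence is built into the precise form of the conformal factor and should be recorded explicitly.
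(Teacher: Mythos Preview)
Your overall strategy matches the paper's: conformal change for $m\geq 3$, warped product $\bar M = M\times_{e^{-f}}\mathbb T^1$ for $m=2$. Two points deserve comment, one minor and one substantive.

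For $m\geq 3$, you invoke the classical existence theory ``from a complete Riemannian manifold''. But the conformal metric $\tilde g_M = e^{-\frac{2f}{m-2}}g_M$ need not be complete, since $f$ is not assumed bounded. The paper handles this by observing that Burstall's proof in \cite{burstall-london} does not actually use completeness of the domain. You should note this explicitly rather than appeal to completeness.

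For $m=2$, your appeal to Hartman-type uniqueness to force $\mathbb S^1$-invariance of the minimizer on $\bar M$ is a genuine gap. Hartman's uniqueness theorem is for \emph{compact} domains; on a noncompact $\bar M$ uniqueness of finite-energy harmonic maps into nonpositively curved targets requires an extra hypothesis such as parabolicity (this is exactly the content of the paper's Theorem \ref{Ith_SYf}), which is not available here. The paper circumvents uniqueness entirely with a slicing argument: by Fubini there is a $T\in\mathbb T$ for which the slice energy $\int_M e^{-f}|d\bar u|^2(x,T)\,dV_M$ is at most the average, so setting $\tilde u(x,t):=\bar u(x,T)$ gives $E^{\bar M}(\tilde u)\leq E^{\bar M}(\bar u)$. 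One then checks, using that $N$ is a $K(\pi,1)$ and that $\bar u$ is homotopic to a map constant on the $\mathbb T$-fibres, that $\tilde u$ is homotopic to $\bar u$; minimality of $\bar u$ forces equality of energies, so $\tilde u$ is itself a minimizer, hence harmonic, and descends to an $f$-harmonic map on $M$. Your parenthetical ``symmetrizing/averaging over the circle does not increase energy and preserves the homotopy class'' is the right instinct --- it is precisely this, not Hartman, that carries the argument --- but note that the paper does not average: it picks a single good slice, and the homotopy-preservation step uses the $K(\pi,1)$ structure explicitly.
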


\begin{remark}{\rm 
When $M$ is compact and $m\geq 3$, Theorem \ref{f-burstall} is due to \cite{EL-Bull}, p.48.
}\end{remark}

\begin{remark}{\rm
Instead of $\nsect\leq 0$ it would be enough for $N$ to be a $K(\pi, 1)$--space and to admit no non--trivial minimizing tangent maps or $r$--spheres
for $2\leq r \leq m -1$. For instance this is the case if the universal cover of $N$ supports a strictly convex exhaustion function; see also \cite{burstall-london, We}.
}\end{remark}

In dimension $m>2$ the proof is pretty easy. First, by straightforward computations, \cite{L}, one can prove that 
\begin{proposition}\label{prop_conf} 
A map $u:(M^m,g_{M})\to (N^n,g_{N})$, $m\geq 3$, is $f$--harmonic if and only if $u:(M^m,e^{-\frac{2f}{m-2}}g_{M})\to(N^n,g_{N})$ is a harmonic map.
\end{proposition}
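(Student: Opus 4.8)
The plan is to recognize the conformal factor $e^{-\frac{2f}{m-2}}$ as precisely the one under which the $f$--energy becomes the ordinary energy, so that the equivalence is a matter of conformal invariance rather than of any analysis. Write $\varphi:=-\frac{f}{m-2}$, so that $\tilde{g}_M:=e^{-\frac{2f}{m-2}}g_M=e^{2\varphi}g_M$, and track the conformal weights of the two ingredients of the energy. On the one hand $dV_{\tilde{g}_M}=e^{m\varphi}\,dV_M$. On the other hand, if $\{e_i\}$ is a local $g_M$--orthonormal frame then $\{e^{-\varphi}e_i\}$ is $\tilde{g}_M$--orthonormal, and since in $du\in T^*M\otimes u^{-1}TN$ only the cotangent factor feels the conformal change (the target metric $g_N$ being left untouched), the Hilbert--Schmidt norm transforms as $|du|^2_{HS,\tilde{g}_M}=e^{-2\varphi}|du|^2_{HS,g_M}$, in every dimension.

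Putting these together, for every compact domain $\Omega\subset M$ one gets
\begin{align*}
E(u|_\Omega,\tilde{g}_M)&=\frac12\int_\Omega e^{-2\varphi}|du|^2_{HS,g_M}\,e^{m\varphi}\,dV_M=\frac12\int_\Omega e^{(m-2)\varphi}|du|^2_{HS,g_M}\,dV_M\\
&=\frac12\int_\Omega e^{-f}|du|^2_{HS,g_M}\,dV_M=E_f(u|_\Omega),
\end{align*}
where we used $(m-2)\varphi=-f$. Thus the two functionals literally agree on every compact domain; since compact domains do not change under a conformal rescaling, $u$ is a critical point of one exactly when it is a critical point of the other, i.e. $u:(M,g_M)\to N$ is (weakly) $f$--harmonic if and only if $u:(M,\tilde{g}_M)\to N$ is (weakly) harmonic. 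For $C^2$ maps one may equivalently check this at the level of Euler--Lagrange equations: the transformation law $\Delta_{\tilde{g}_M}=e^{-2\varphi}\big(\Delta_{g_M}+(m-2)\langle\nabla\varphi,\nabla\,\cdot\,\rangle_{g_M}\big)$ of the Laplace--Beltrami operator, together with the invariance of the Christoffel symbols of $N$, gives $\tilde{\tau}(u)=e^{-2\varphi}\big(\tau(u)+(m-2)\,du(\nabla\varphi)\big)$; plugging in $\varphi=-\frac{f}{m-2}$ yields $\tilde{\tau}(u)=e^{\frac{2f}{m-2}}\big(\tau(u)-du(\nabla f)\big)=e^{\frac{2f}{m-2}}\tau_f u$, which vanishes exactly when $\tau_f u$ does.

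There is essentially no obstacle here beyond careful bookkeeping with the conformal weights: the content is the pair of identities $dV_{\tilde{g}_M}=e^{m\varphi}dV_M$ and $|du|^2_{HS,\tilde{g}_M}=e^{-2\varphi}|du|^2_{HS,g_M}$, whose net effect is the factor $e^{(m-2)\varphi}$, and it is exactly the dimensional constant $m-2$ that dictates the choice $\varphi=-f/(m-2)$ and is also the reason the statement degenerates when $m=2$, where $E_f$ instead corresponds to a non--conformal, warped modification as in Proposition \ref{prop_warp}.
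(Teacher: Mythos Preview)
Your proof is correct and in fact supplies the ``straightforward computations'' that the paper merely alludes to (the paper gives no argument beyond citing \cite{L}). Both the variational argument via the identity $E(u|_\Omega,\tilde g_M)=E_f(u|_\Omega)$ and the pointwise check $\tilde\tau(u)=e^{2f/(m-2)}\tau_f u$ are valid and standard; either one alone would suffice.
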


In \cite{burstall-london}, F. Burstall proved that if $\nsect\leq 0$, then in the homotopy class of each finite energy map there exists a smooth harmonic representative which minimizes the energy in the homotopy class. As a matter of fact, even if the result is there stated for complete manifolds, the proof does not require the underlying manifold $M$ to be complete. Accordingly, from Proposition \ref{prop_conf} and Theorem 5.2 in \cite{burstall-london} we get the validity of Theorem \ref{f-burstall} when $m>2$. On the other hand, on $2$-dimensional manifolds, it is easily seen that ($f$-)energy is conformally invariant so that this approach does not work. Thus, we follow a different strategy which permits to obtain the result in all dimensions $m\geq 2$. Namely, it was suggested in Section 1.2 of \cite{Co} that $f$--harmonicity on $M$ is expected to correspond to harmonicity on some higher dimensional warped product manifold. We will make this fact explicit.

We consider the warped product $\bm^n=M\times_h\ttt$, where $h:=e^{-f}$ and $\ttt=\ttt^1=\rr/\mathbb{Z}$, so that $\vol(\ttt)=1$. Here and on, each point in $\bm$ is individuated by its projections $x$ on $M$ and $t$ on $\ttt$. Moreover we recall that the metric on $\bm$ is given by $g_{\bm}(x,t)=g_{M}(x)+h^2(x)dt^2$. Throughout the following proofs, $\{E_i\}_{i=1}^n$ is a local orthonormal frame at $(x,t)\in\bm$ such that $\{E_j\}_{j=1}^{n-1}$ is a local orthonormal frame at $x\in M$ and $E_n=h^{-1}\frac{\partial}{\partial t}\in T_t\ttt$.

Even if this is not necessarily used in the proof of Theorem \ref{f-burstall} given below, we start pointing out the explicit relation, of its own interest, between the $f$--tension field on $M$ and the tension field on $\bar M$.
\begin{proposition}\label{prop_warp}
Given a $C^2$ map $v:M\to N$, define the $C^2$ map $\bar v:\bm\to N$ as $\bv(x,t):=v(x)$ for all $(x,t)\in\bm$. Then
\[
\tau\bv(x,t)=\tau_fv(x).
\]
In particular, $v$ is $f$--harmonic if and only if $\bv$ is harmonic.
\end{proposition}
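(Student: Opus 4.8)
The plan is to compute the tension field $\tau\bar v$ directly from the warped product structure of $\bm = M \times_h \ttt$, using the local orthonormal frame $\{E_i\}_{i=1}^n$ introduced before the statement, and to show that all the contributions from the $\ttt$-direction combine exactly into the extra term $-i_{\nabla f}dv$ that distinguishes $\tau_f v$ from $\tau v$. Recall that $\tau\bar v = \sum_{i=1}^n \left(\bar\nabla_{E_i} d\bar v\right)(E_i) = \sum_{i=1}^n \left(\td_{E_i}(d\bar v(E_i)) - d\bar v(\nabla^{\bm}_{E_i} E_i)\right)$, where $\td$ is the pullback connection on $\bar v^{-1}TN$ and $\nabla^{\bm}$ is the Levi-Civita connection of $g_{\bm}$. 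Since $\bar v$ is constant along $\ttt$, we have $d\bar v(E_n) = h^{-1}d\bar v(\partial_t) = 0$, so the terms with $i=n$ reduce to $-d\bar v(\nabla^{\bm}_{E_n}E_n)$, while for $j \leq n-1$ we have $d\bar v(E_j) = dv(E_j)$ as a section over $M$.

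First I would recall the standard warped-product connection formulas: for vector fields $X,Y$ tangent to the $M$-factor and $U = \partial_t$ tangent to the $\ttt$-factor, one has $\nabla^{\bm}_X Y = \nabla^M_X Y$ (the $\ttt$-fibers being one-dimensional, there is no second fundamental form correction landing in the base here), $\nabla^{\bm}_X U = \nabla^{\bm}_U X = \frac{X(h)}{h} U$, and $\nabla^{\bm}_U U = -h\,\nabla^M h = -h\, g_M(\nabla h, \cdot)^\sharp$ (using $\partial_t$ has no $\ttt$-derivative of $h$ and $h$ depends only on $x$). From these, $\nabla^{\bm}_{E_n}E_n = h^{-2}\nabla^{\bm}_{\partial_t}\partial_t = -h^{-1}\nabla^M h = \nabla^M f$ since $h = e^{-f}$ gives $\nabla h = -e^{-f}\nabla f = -h\nabla f$, hence $-h^{-1}\nabla h = \nabla f$. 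Therefore the $i=n$ contribution to $\tau\bar v$ is $-d\bar v(\nabla^{\bm}_{E_n}E_n) = -dv(\nabla^M f) = -i_{\nabla f}dv$, which is precisely the term we want.

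Next I would handle the $j \leq n-1$ terms. Here $\td_{E_j}(d\bar v(E_j)) = \td_{E_j}(dv(E_j))$, and since the curve parameter $t$ plays no role, this equals the pullback-connection expression computed purely on $M$; similarly $\nabla^{\bm}_{E_j}E_j = \nabla^M_{E_j}E_j$ by the first connection formula above. Thus $\sum_{j=1}^{n-1}\left(\td_{E_j}(d\bar v(E_j)) - d\bar v(\nabla^{\bm}_{E_j}E_j)\right) = \sum_{j=1}^{n-1}\left(\td_{E_j}(dv(E_j)) - dv(\nabla^M_{E_j}E_j)\right) = \tau v(x)$, since $\{E_j\}_{j=1}^{n-1}$ is an orthonormal frame for $T_xM$ and the standard tension field is frame-independent. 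Adding the two pieces gives $\tau\bar v(x,t) = \tau v(x) - i_{\nabla f}dv(x) = \tau_f v(x)$, and the ``in particular'' clause is immediate since $\tau\bar v \equiv 0$ on $\bm$ iff $\tau_f v \equiv 0$ on $M$.

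The main obstacle, such as it is, lies in getting the warped-product connection and curvature bookkeeping exactly right — in particular the sign and the factor of $h$ in $\nabla^{\bm}_{\partial_t}\partial_t$, and making sure no cross terms from $\nabla^{\bm}_{E_j}E_j$ leak into the $\ttt$-direction (they do not, precisely because $d\bar v$ annihilates that direction, so even if they did they would contribute nothing). I would double-check the identity $\nabla^{\bm}_{\partial_t}\partial_t = -h\nabla^M h$ against a reference such as O'Neill, and verify the conversion $h = e^{-f} \Rightarrow -h^{-1}\nabla^M h = \nabla^M f$, since a stray sign here would break the whole correspondence. Everything else is a routine frame computation that needs no completeness or curvature hypothesis on $M$ or $N$.
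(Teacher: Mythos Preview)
Your proof is correct and follows essentially the same route as the paper's: both compute $\tau\bar v$ via the local orthonormal frame $\{E_i\}$, invoke the O'Neill warped-product connection formulas to get $\nabla^{\bm}_{E_j}E_j=\nabla^M_{E_j}E_j$ for $j\leq n-1$ and $\nabla^{\bm}_{E_n}E_n=\nabla^M f$, and then read off $\tau\bar v=\tau v-dv(\nabla f)=\tau_f v$. One small remark: the identity $\nabla^{\bm}_X Y=\nabla^M_X Y$ for $X,Y$ tangent to the base holds because the base leaves are totally geodesic in any warped product, not because the fiber is one-dimensional; your parenthetical justification is slightly off, but the formula and the rest of the argument are fine.
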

\begin{proof}
Following the rules of covariant derivatives on warped products, see \cite{On} p. 206, we can compute
\[
\left({}^\bm\nabla_{E_j}E_j\right)(x,t) = \left(\left({}^M\nabla_{E_j}E_j\right)(x),0\right)
\]
for $j=1,\dots,n-1$, and 
\[
\left({}^\bm\nabla_{E_n}E_n\right)(x,t) = \left(-\frac{{}^M\nabla h(x)}{h},\left({}^\ttt\nabla_{E_n}E_n\right)(t)\right)=\left({}^M\nabla f(x),0\right).
\]
Moreover, by definition of $\bv$ we have
\[
\left({}^N\nabla_{d\bv(E_j)}d\bv(E_j)\right)(x,t) = \left({}^N\nabla_{dv(E_j)}dv(E_j)\right)(x)
\]
for $j=1,\dots,n-1$, and 
\[
\left({}^N\nabla_{d\bv(E_n)}d\bv(E_n)\right)(x,t) = 0.
\]
Then, we get
\begin{align*}
\Hess \bv|_{(x,t)}(E_j,E_j) &= \left({}^N\nabla_{d\bv(E_j)}d\bv(E_j) - d\bv ({}^\bm\nabla_{E_j}E_j)\right)(x,t) \\
&=  \left({}^N\nabla_{dv(E_j)}dv(E_j) - dv ({}^M\nabla_{E_j}E_j)\right)(x)
\end{align*}
for $j=1,\dots,n-1$, and 
\begin{align*}
\Hess \bv|_{(x,t)}(E_n,E_n) &= \left({}^N\nabla_{d\bv(E_n)}d\bv(E_n) - d\bv ({}^\bm\nabla_{E_n}E_n)\right)(x,t) \\
&=- dv \left(({}^M\nabla f)(x)\right).
\end{align*}
We thus obtain
\begin{align*}
\tau\bv(x,t) &= \sum_{i=1}^n\Hess \bv|_{(x,t)}(E_i,E_i) \\
&= \sum_{j=1}^{n-1}\Hess v|_{x}(E_j,E_j)- dv \left(({}^M\nabla f)(x)\right)\\
&=\tau v(x) - dv \left(({}^M\nabla f)(x)\right) = \tau_fv(x).
\end{align*}
\end{proof}
We have shown that any $f$--harmonic map on $M$ is harmonic when trivially extended to $\bm = M\times_{e^{-f}} \ttt^1$. Clearly, the converse does not hold, since in general a harmonic map from $\bm$ to $N$ does depend on $t$. Indeed, it suffices to consider the example given by $M=\rr$, $f\equiv 0$ and $N=\ttt^2=\ttt\times\ttt$, and it is easily seen that the locally isometric covering map $P: \rr\times\ttt=\bar M \to N=\ttt^2$ is harmonic, but it is not of the form $P(x,t)\equiv P(x,T)$ for any $T\in\ttt$. Nevertheless, as the following proof shows, this converse property holds true for some specific harmonic maps on $\bm$ which minimize energy in their homotopy class.

\begin{proof}[Proof (of Theorem \ref{f-burstall}).]
\textbf{Step a.} Let $v:M\to N$ be a continuous map satisfying $E_f(v)<+\infty$ and define the continuous function $\bar v:\bm\to N$ as $\bv(x,t):=v(x)$. 
We note that
\begin{align*}
|d\bv|^2_{HS(\bm,N)}&=\sum_{i=1}^n\nmetr{d\bv(E_i)}{d\bv(E_i)}\\&=\sum_{j=1}^{n-1}\nmetr{dv(E_j)}{dv(E_j)}=|dv|^2_{HS(M,N)},
\end{align*}
and the latter relation holds in the weak sense for $v,\bv\in C^0$. Since $v$ has finite $f$--energy and $\vol(\ttt)=1$, we can apply Fubini's theorem to get
\begin{align}\label{rel_en}
E^\bm(\bar v)&=\int_\bm|d\bv|^2_{HS(\bm,N)}dV_\bm  
= \int_M e^{-f(x)}\int_{\ttt}|dv|^2_{HS(M,N)}dtdV_M\\ 
&= \int_M e^{-f(x)}|dv|^2_{HS(M,N)}dV_M = E_f^M(v) \nonumber
\end{align}
So $\bv$ is a continuous finite energy map from $\bm$ to a compact manifold $N$ with $\nsect\leq 0$, and according to \cite{burstall-london} we know that there exists a smooth harmonic map $\bu:\bm\to N$ which minimizes the energy in its homotopy class. 

\textbf{Step b.} In this step, we prove that we can choose $\bu$ such that it has the form $\bu(x,t)=u(x)$ for some smooth map $u:M\to N$. In fact, since $E^{\bm}(\bu)<+\infty$, we can apply Fubini's theorem to get
\begin{align*}
\infty>E^\bm(\bu)&=\int_Me^{-f(x)}\int_\ttt|d\bu|^2_{HS(\bm,N)}(x,t)dtdV_M(x)\\&=\int_\ttt\left(\int_Me^{-f(x)}|d\bu|^2_{HS(\bm,N)}(x,t)dV_M(x)\right)dt,
\end{align*}
and we can choose $T\in\ttt$ such that
\begin{align}\label{tT}
&\int_\ttt\left(\int_Me^{-f(x)}|d\bu|^2_{HS(\bm,N)}(x,T)dV_M(x)\right)dt\\
&\leq\int_\ttt\left(\int_Me^{-f(x)}|d\bu|^2_{HS(\bm,N)}(x,t)dV_M(x)\right)dt.\nonumber
\end{align}
We can define a further smooth map $\tu:\bm\to N$ as $\tu(x,t):=\bu(x,T)$ for all $(x,t)\in\bm$. Since $d\tu(E_n)=0$, we have that
\begin{align*}
|d\tu|^2_{HS(\bm,N)}(x,t)
&=|d\tu|^2_{HS(\bm,N)}(x,T)
=\sum_{j=1}^{n-1}\nmetr{d\tu(E_j)}{d\tu(E_j)}(x,T)\\
&=\sum_{j=1}^{n-1}\nmetr{d\bu(E_j)}{d\bu(E_j)}(x,T)\\
&\leq \sum_{i=1}^{n}\nmetr{d\bu(E_i)}{d\bu(E_i)}(x,T)\\
&=|d\bu|^2_{HS(\bm,N)}(x,T)
\end{align*}
for all $(x,t)\in\bm$. From \eqref{tT} we thus obtain
\begin{align}\label{ener}
E^{\bm}(\tu)&=\int_\ttt\left(\int_Me^{-f(x)}|d\tu|^2_{HS(\bm,N)}(x,t)dV_M(x)\right)dt\\
&\leq \int_\ttt\left(\int_Me^{-f(x)}|d\bu|^2_{HS(\bm,N)}(x,T)dV_M(x)\right)dt\nonumber\\
&\leq \int_\ttt\left(\int_Me^{-f(x)}|d\bu|^2_{HS(\bm,N)}(x,t)dV_M(x)\right)dt=E^{\bm}(\bu)\nonumber
\end{align}
Now we prove that $\tu$ is homotopic to $\bu$. To this end, we recall that since $N$ is a $K(\pi,1)$--space, the homotopy class of a map $w:\bm\to N$ is completely characterized by the action on $\pi_1(\bar M)$ of the homeomorphism induced by $w$,
$$w_\sharp:\pi_1(\bm,(x,t))\to\pi_1(N,w(x,t)).$$
We have that $\pi_1(\bm,(x,t))\cong\pi_1(M,x)\times\pi_1(\ttt,t)$. Since $\bu$ is homotopic to $\bv$ and, by construction, $\bv_\sharp(\gamma)=id\in\pi_1(N)$ for all $\gamma\in\pi_1(\ttt,t)<\pi_1(\bm,(x,t))$, we have that $\bu_\sharp(\gamma)=id\in\pi_1(N)$ for all $\gamma\in\pi_1(\ttt,t)$. On the other hand, by definition of $\tu$ we have that $\tu_\sharp(\gamma)=id\in\pi_1(N)$ for all $\gamma\in\pi_1(\ttt,t)$ and $\tu_\sharp|_{\pi_1(M,x)}$ is conjugated to $\bu_\sharp|_{\pi_1(M,x)}$. Accordingly, $\tu_\sharp$ is conjugated to $\bu_\sharp$, so that $\tu$ is homotopic to $\bu$. Since we have proved above that $\bu$ minimizes the energy in its homotopy class, this together with \eqref{ener} implies that $E^\bm(\tu)=E^\bm(\bu)$. Hence $\tu$ is a smooth minimizer of the energy in its homotopy class, it is thus harmonic and it has the aimed form.

\textbf{Step c.} We have shown that there exists a smooth map $u:M\to N$ such that $\tu(x,t)=u(x)=\bu(x,T)$ for all $(x,t)\in\bm$. This map $u$ is a minimizer of the $f$--energy in its homotopy class. In fact, by contradiction suppose there exists another map $u_0:M\to N$ homotopic to $u$ with $E_f^M(u_0)<E_f^M(u)$. Then we can define $\tu_0:\bm\to N$ as $\tu_0(x,t)=u_0(x)$ for all $(x,t)\in \bm$ and reasoning as above we would get that $\tu_0$ is homotopic to $\tu$ and $E^\bm(\tu_0)<E^\bm(\tu)$. This would give the contradiction.

Hence $u$ is a smooth minimizer of the $f$--energy in its homotopy class, and it is therefore an $f$--harmonic map.
\end{proof}

To conclude this section, we would like to say some words to characterize the $f$--harmonic representative in homotopy class. In fact, thanks to the relations between $M$ and $\bar M$ pointed out above, it is pretty easy to see how to generalize the uniqueness results obtained in the harmonic setting by Schoen and Yau \cite{SY-Topo} when $M$ has finite volume (see also Remark 4 in \cite{PRS-MathZ} for the improved parabolic version). We recall that a manifold $M$ is said to be $f$--parabolic if for some (hence every) compact set $K\subset M$ with non--empty interior the $f$--capacity of $K$ is null, i.e.
\[
\Capp f(K)=\inf \left\{E_f(\varphi)\ :\ \varphi\in C_c^{\infty}(M),\ \left.\varphi\right|_K\geq1 \right\}=0.
\] 
There exist several equivalent definitions of $f$--parabolicity. To our purpose, let us just recall that $M$ is $f$--parabolic if and only if every bounded $f$--subharmonic function is necessarily constant. This equivalence can be easily proved by adapting to the weighted setting the standard arguments used in the non--weighted case; see e.g. \cite{Gr3}. Now, consider a compact set $K\subset M$ with non--empty interior and define the compact set $\bar K=K\times\ttt\subset \bar M$. First, suppose $\Capp f(K)=0$ in $M$. Then by definition of $f$--capacity and by relation \eqref{rel_en}, it is clear that $\Capp{}(\bar K)=0$ in $\bar M$. On the other hand, suppose $\bar M$ is parabolic. Let $\psi:M\to\rr$ be a bounded $f$--subharmonic function. Defining $\bar\psi:\bar M\to \rr$ as in Proposition \ref{prop_warp}, we have that $\bar\psi$ is a bounded subharmonic function on $\bar M$. Hence $\bar\psi$ and in turn $\psi$ are constant. This proves the following
\begin{lemma}
Let $M$ be a complete Riemannian manifold and $f\in C^\infty(M)$. Then $M$ is $f$--parabolic if and only if $\bar M = M\times_{e^{-f}}\ttt$ is parabolic.
\end{lemma}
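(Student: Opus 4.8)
The plan is to prove the equivalence by transferring the two characterizations of parabolicity (capacity zero / Liouville property for bounded subharmonic functions) back and forth between $M$ and $\bm = M \times_{e^{-f}} \ttt$, exactly as the discussion immediately preceding the statement suggests. Concretely, I would organize the proof as a two-way implication, using whichever of the two equivalent definitions is more convenient in each direction.

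First, for the forward direction, assume $M$ is $f$--parabolic, i.e. $\Capp{f}(K) = 0$ for a compact $K \subset M$ with non-empty interior. Fix such a $K$ and set $\bar K = K \times \ttt \subset \bm$, which is compact with non-empty interior. Given $\varepsilon > 0$, choose $\varphi \in C_c^\infty(M)$ with $\varphi|_K \geq 1$ and $E_f(\varphi) < \varepsilon$; then the trivially extended function $\bar\varphi(x,t) := \varphi(x)$ lies in $C_c^\infty(\bm)$, satisfies $\bar\varphi|_{\bar K} \geq 1$, and by the energy identity \eqref{rel_en} (applied with $N = \rr$, so that $|d\bar\varphi|^2_{HS} = |\nabla\bar\varphi|^2$) we get $E(\bar\varphi) = E_f(\varphi) < \varepsilon$. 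Hence $\Capp{}(\bar K) = 0$ in $\bm$, so $\bm$ is parabolic. I would note in passing that \eqref{rel_en} was stated for maps into $N$ but its scalar specialization is the same Fubini computation, $\int_{\bm} |\nabla \bar\varphi|^2 \, dV_{\bm} = \int_M e^{-f} |\nabla\varphi|^2 \, dV_M$, since $\vol(\ttt)=1$ and $\bar\varphi$ is $t$-independent.

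For the reverse direction, assume $\bm$ is parabolic and use the Liouville characterization. Let $\psi : M \to \rr$ be a bounded $f$--subharmonic function, i.e. $\Delta_f \psi \geq 0$. Define $\bar\psi : \bm \to \rr$ by $\bar\psi(x,t) := \psi(x)$. By Proposition \ref{prop_warp} (with $N = \rr$, where the tension field of a real-valued map is just its Laplacian, and $\tau_f \psi = \Delta_f \psi$), we have $\Delta_{\bm} \bar\psi(x,t) = \Delta_f \psi(x) \geq 0$, so $\bar\psi$ is a bounded subharmonic function on the parabolic manifold $\bm$; parabolicity forces $\bar\psi$ to be constant, and therefore $\psi$ is constant. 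Since $M$ being $f$--parabolic is equivalent to every bounded $f$--subharmonic function being constant (the weighted analogue of the standard equivalence, cf. \cite{Gr3}), this shows $M$ is $f$--parabolic, completing the proof.

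I do not expect any serious obstacle here: both directions are short and rest entirely on tools already established in the excerpt — the energy/volume identity \eqref{rel_en} and the tension-field identity of Proposition \ref{prop_warp}, specialized to real-valued functions. The only points requiring a word of care are (i) making sure the scalar case of \eqref{rel_en} and of Proposition \ref{prop_warp} is legitimately what is being invoked (it is, since a function is a map into $\rr$ and the Hilbert--Schmidt norm of its differential is $|\nabla\psi|$), and (ii) citing the weighted version of the Grigor'yan-type equivalence between capacity-parabolicity and the bounded-subharmonic Liouville property, which is routine to adapt from the unweighted setting. If one preferred to avoid even that citation, one could instead prove both directions using only the capacity definition, approximating a bounded $f$--subharmonic function argument by capacitors, but the mixed approach above is cleaner.
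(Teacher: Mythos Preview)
Your proposal is correct and follows exactly the paper's own argument: the forward implication via the capacity definition, passing from $K$ to $\bar K=K\times\ttt$ and using the energy identity \eqref{rel_en}, and the reverse implication via the Liouville characterization, lifting a bounded $f$--subharmonic $\psi$ to $\bar\psi$ and invoking Proposition~\ref{prop_warp}. The points of care you flag (scalar specialization of \eqref{rel_en} and of Proposition~\ref{prop_warp}, and the weighted analogue of the Grigor'yan equivalence) are precisely the ones the paper handles in the paragraph preceding the lemma.
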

Now, suppose $M$ is $f$--parabolic, $N$ is non--positively curved and we are given two homotopic smooth $f$--harmonic maps $u,v:M\to N$ with finite $f$--energy. Defining maps $\bar u, \bar v:\bar M\to N$ as above we have that $\bar u$ and $\bar v$ are smooth finite--energy harmonic maps on the parabolic manifold $\bar M$, and they are homotopic since they are constantly extended on the fiber $\ttt$. Then we can apply Theorems 1 and 2 in \cite{SY-Topo} to $\bar u,\bar v$ to get

\begin{theorem}\label{Ith_SYf}
Let $M$ and $N$ be complete Riemannian manifolds and assume that $M$ is $f$--parabolic,
\begin{itemize}
\item[i)] Let $u:M\to N$ be a smooth $f$--harmonic map of finite $f$--energy. If $\nsect <0$, there's no other $f$--harmonic map of finite $f$--energy homotopic to $u$ unless $u(M)$ is contained in a geodesic of $N$.
\item[ii)] If $\nsect\leq 0$ and $u,v:M\to N$ are homotopic smooth $f$--harmonic maps of 
finite $f$--energy, then there is a continuous one--parameter family of maps $u_s:M\to N$ with $u_0=u$ and $u_1=v$ such that every $u_s$ is a $f$--harmonic map of constant $f$--energy (independent of $s$) and for each $q\in M$ the curve $s\mapsto u_s(q)$, $s\in[0,1]$, is a constant (independent of $q$) speed parametrization of a geodesic. 
\end{itemize}
\end{theorem}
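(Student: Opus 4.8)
The plan is to transplant the whole problem to the associated warped product $\bm=M\times_{e^{-f}}\ttt$ and then quote the Schoen--Yau uniqueness theory there, as anticipated in the discussion preceding the statement. To the given $f$--harmonic maps $u$ (and $v$) of finite $f$--energy I would attach the maps $\bu,\bv\colon\bm\to N$, $\bu(x,t):=u(x)$, $\bv(x,t):=v(x)$. By Proposition \ref{prop_warp} these are harmonic; by \eqref{rel_en} they have finite energy, $E^{\bm}(\bu)=E_f^M(u)<+\infty$ and $E^{\bm}(\bv)=E_f^M(v)<+\infty$; and they are homotopic on $\bm$ whenever $u$ and $v$ are homotopic on $M$ (lift a homotopy $H(x,s)$ to $\bar H((x,t),s):=H(x,s)$). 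Since $M$ is $f$--parabolic, the preceding Lemma gives that $\bm$ is parabolic, and $\bm$ is complete because $M$ is and the $\ttt$--fibre is compact; $N$ is complete with $\nsect\leq 0$. Thus $\bu,\bv$ satisfy exactly the hypotheses under which Theorems~1 and~2 of \cite{SY-Topo} (in the parabolic form recorded in \cite{PRS-MathZ}) apply.

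For (i): suppose there is an $f$--harmonic map $v\neq u$ of finite $f$--energy homotopic to $u$. Then $\bv\neq\bu$ (equality would force $v=u$), and $\bv$ is a second finite--energy harmonic map homotopic to $\bu$ on the parabolic manifold $\bm$, with $\nsect<0$. Theorem~1 of \cite{SY-Topo} then forces $\bu(\bm)$ to be contained in a geodesic of $N$; since $\bu(\bm)=u(M)$, this is exactly the conclusion of (i).

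For (ii): Theorem~2 of \cite{SY-Topo} applied to $\bu,\bv$ produces a continuous family $\bar u_s\colon\bm\to N$, $s\in[0,1]$, with $\bar u_0=\bu$, $\bar u_1=\bv$, each $\bar u_s$ harmonic of constant energy, and with $s\mapsto\bar u_s(p)$ a constant--speed parametrization of a geodesic whose speed is independent of $p\in\bm$. The only genuinely new point, and the one I expect to require care, is that each $\bar u_s$ is independent of the fibre coordinate $t$, hence descends to a map $u_s\colon M\to N$, $u_s(x):=\bar u_s(x,t)$. Granting this, the conclusion of (ii) follows at once: $u_s$ is $f$--harmonic by Proposition \ref{prop_warp}, $E_f^M(u_s)=E^{\bm}(\bar u_s)$ is independent of $s$ by \eqref{rel_en}, and $s\mapsto u_s(q)=\bar u_s(q,t)$ is a constant--speed geodesic whose speed, being independent of $p\in\bm$, is in particular independent of $q\in M$. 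To prove the $t$--independence I would fix $x\in M$ and, for each $t\in\ttt$, look at the geodesic $\gamma_{x,t}(s):=\bar u_s(x,t)$ from $u(x)$ to $v(x)$. Joint continuity of $(p,s)\mapsto\bar u_s(p)$ makes $t\mapsto\gamma_{x,t}$ a continuous map of the connected space $\ttt$ into the space of paths in $N$ joining $u(x)$ to $v(x)$, so all the $\gamma_{x,t}$ are homotopic to one another relative to their endpoints. Because $N$ is complete with $\nsect\leq 0$, its universal cover is a Hadamard manifold, and there is exactly one geodesic in each homotopy class, relative to endpoints, of paths joining two given points of $N$; hence $\gamma_{x,t}$ does not depend on $t$, i.e. $\bar u_s(x,t)$ is independent of $t$.

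The reduction to $\bm$ is entirely routine, being little more than Proposition \ref{prop_warp}, the energy identity \eqref{rel_en}, and the parabolicity Lemma already at our disposal, and part (i) is then immediate. The substantive step is the $t$--independence of the Schoen--Yau homotopy used for (ii). Besides the geodesic--uniqueness argument above, the only points to double--check there are that the family $(p,s)\mapsto\bar u_s(p)$ is genuinely jointly continuous (it is: each $\bar u_s$ is smooth and $s\mapsto\bar u_s$ is continuous, with $[0,1]$ compact) and that the normalization ``constant speed, $p$--independent speed'' survives the passage $u_s(x):=\bar u_s(x,t)$, which it does since nothing in the fibre direction is altered.
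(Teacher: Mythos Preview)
Your approach coincides with the paper's: lift $u,v$ to $\bar u,\bar v$ on the warped product $\bar M=M\times_{e^{-f}}\ttt$, use Proposition~\ref{prop_warp} and the energy identity~\eqref{rel_en}, invoke the parabolicity Lemma, and apply Theorems~1 and~2 of \cite{SY-Topo}. The only point where you do something different is the $t$--independence of the Schoen--Yau family $\bar u_s$ needed for (ii). The paper does not argue this from the \emph{statement} of \cite{SY-Topo}; it remarks afterwards that the required form $\bar u_s(x,t)=u_s(x)$ ``is not a direct consequence of the statement of Theorem~2 in \cite{SY-Topo}, but it can be easily deduced by the construction given in Schoen and Yau's proof''. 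Your alternative --- noting that for fixed $x$ the curves $\gamma_{x,t}(s)=\bar u_s(x,t)$ are constant--speed geodesics with common endpoints $u(x),v(x)$ and common speed, all mutually homotopic rel endpoints by joint continuity over the connected fibre $\ttt$, hence coincident by uniqueness of geodesics in a relative homotopy class when $\nsect\leq 0$ --- is correct and has the merit of using only the conclusion of Schoen--Yau's theorem rather than inspecting its proof.
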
  

The fact that the harmonic maps $\bar u_s:\bar M \to N$ obtained in the proof of Theorem 2 in \cite{SY-Topo} have the form $\bu_s(x,t):=u_s(x)$ for all $(x,t)\in\bm$ is not a direct consequence of the statement of Theorem 2 in \cite{SY-Topo}, but it can be easily deduced by the construction given in Schoen and Yau's proof.

 \section{Vanishing results}\label{sec_vanishing}

A classical approach in harmonic maps theory leads to obtain vanishing results for the harmonic representative, imposing some additional assumptions on the curvature of $M$. In order to extend this part of the theory to our setting, we need a Bochner formula for $f$--harmonic maps. Such a formula is well known for functions \cite{BE}, while for maps is essentialy contained in \cite{L}, Section 13. Since apparently no explicit version is given in the literature, we give a detailed proof in the following. 
\begin{proposition}\label{prop_f-bochner}
Let $v : (M^m,g_{M}) \to (N^n, g_{N})$ be a $C^2$ map. Then
\begin{align}\label{f-bochner}
\frac12\Delta_f|dv|^2 
&= |Ddv|^2 
+ \hsmetr{dv}{d\tau_fv} 
+ \sum_{i=1}^m\nmetr{dv(\mricc_f(E_i,\cdot)^{\sharp})}{dv (E_i)}\\
&- \sum_{i,j=1}^m\nmetr{{}^N\operatorname{Riem}(dv (E_i),dv (E_j))dv (E_j)}{dv (E_i)} \nonumber, 
\end{align}
where $\{E_i\}_{i=1}^m$ is some chosen orthonormal frame on $M$.
\end{proposition}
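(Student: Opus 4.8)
The plan is to reduce the weighted Bochner formula to the classical (unweighted) Bochner formula for harmonic maps by working on the warped product $\bm = M\times_{e^{-f}}\ttt$, exploiting the correspondence established in Proposition \ref{prop_warp}. Given a $C^2$ map $v:M\to N$, form $\bv:\bm\to N$, $\bv(x,t):=v(x)$. The key observations, all already implicit in the excerpt, are that $|d\bv|^2_{HS(\bm,N)}(x,t)=|dv|^2_{HS(M,N)}(x)$ and, by Proposition \ref{prop_warp}, $\tau\bv(x,t)=\tau_fv(x)$; moreover $d\bv(E_n)=0$, so only the $\{E_j\}_{j=1}^{n-1}$ directions contribute to any contraction over $\bm$-frames. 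Writing out the standard Eells--Sampson Bochner formula on $\bm$ for the (not necessarily harmonic) map $\bv$,
\[
\tfrac12\Delta|d\bv|^2 = |Dd\bv|^2 + \hsmetr{d\bv}{d\tau\bv} + \sum_{i=1}^n\nmetr{d\bv({}^{\bm}\ricc(E_i,\cdot)^\sharp)}{d\bv(E_i)} - \sum_{i,j}\nmetr{{}^N\riem(d\bv(E_i),d\bv(E_j))d\bv(E_j)}{d\bv(E_i)},
\]
I would then translate each of the four terms back to $M$.

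The first step is the left-hand side: since $|d\bv|^2$ is a function on $\bm$ depending only on $x$, and since for a function $\phi=\phi(x)$ one has ${}^{\bm}\Delta\phi={}^M\Delta\phi - \ps{{}^M\nabla f}{{}^M\nabla\phi} = \Delta_f\phi$ (this follows from the warped-product Laplacian formula with warping function $h=e^{-f}$, or directly from ${}^{\bm}\nabla_{E_n}E_n = {}^M\nabla f$ as computed in the proof of Proposition \ref{prop_warp}), we get $\tfrac12{}^{\bm}\Delta|d\bv|^2 = \tfrac12\Delta_f|dv|^2$. The second step is the curvature term of $N$: because $d\bv(E_n)=0$, the sum over $i,j=1,\dots,n$ collapses to the sum over $j=1,\dots,n-1$, which is exactly the corresponding term for $v$ on $M$ with the frame $\{E_j\}$. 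The third step is the tension term: $\hsmetr{d\bv}{d\tau\bv} = \sum_{j=1}^{n-1}\nmetr{d\bv(E_j)}{d(\tau\bv)(E_j)}$, and since $\tau\bv = \tau_fv$ depends only on $x$, this equals $\hsmetr{dv}{d\tau_fv}$ on $M$.

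The main obstacle — and the step requiring the most care — is the identification of the Hessian-squared term $|Dd\bv|^2$ and the ${}^{\bm}\ricc$ term, because these two terms ``exchange mass'' between $M$ and the warped fiber. Concretely, ${}^{\bm}\ricc(E_n,E_n) = -\frac{{}^M\Delta h}{h} = \Delta f - |\nabla f|^2$, while ${}^{\bm}\ricc(E_j,E_k) = {}^M\ricc(E_j,E_k) - \frac{\Hess h(E_j,E_k)}{h} = {}^M\ricc(E_j,E_k)+\Hess f(E_j,E_k) - \nabla_j f\,\nabla_k f$ for $j,k\le n-1$; these are the standard warped-product Ricci identities (O'Neill, p.~211, with $h=e^{-f}$). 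The first of these contributes nothing to $\sum_i\nmetr{d\bv({}^{\bm}\ricc(E_i,\cdot)^\sharp)}{d\bv(E_i)}$ since $d\bv(E_n)=0$, but the $\mixed$ off-diagonal coupling $-\nabla f\otimes\nabla f$ in the remaining block produces an extra term $-\sum_j\nmetr{dv(\nabla_{\nabla f}\,\cdot)}{\dots}$ which must cancel against a matching term coming out of $|Dd\bv|^2$. For the Hessian term I would compute $Dd\bv$ on $\bm$ using the connection formulas from the proof of Proposition \ref{prop_warp}: $Dd\bv(E_j,E_k)$ for $j,k\le n-1$ equals $Ddv(E_j,E_k)$ on $M$; $Dd\bv(E_j,E_n)=Dd\bv(E_n,E_j) = -d\bv({}^{\bm}\nabla_{E_n}E_j)$, and one checks ${}^{\bm}\nabla_{E_n}E_j = \frac{E_j(h)}{h}E_n = -E_j(f)E_n$ so these mixed components vanish; and $Dd\bv(E_n,E_n) = -d\bv({}^{\bm}\nabla_{E_n}E_n) = -dv(\nabla f)$. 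Hence $|Dd\bv|^2 = |Ddv|^2 + |dv(\nabla f)|^2$, where $|dv(\nabla f)|^2 = \nmetr{dv(\nabla f)}{dv(\nabla f)} = \sum_{i}\nmetr{dv(\nabla_i f\,E_i)}{dv(\nabla f)} = \sum_i \nabla_i f\,\nmetr{dv(E_i)}{dv(\nabla f)}$, which is precisely the term needed to promote $\sum_i\nmetr{dv({}^M\ricc(E_i,\cdot)^\sharp)}{dv(E_i)}$ to $\sum_i\nmetr{dv(\mricc_f(E_i,\cdot)^\sharp)}{dv(E_i)}$ after absorbing the $-\nabla f\otimes\nabla f$ piece from the Ricci block, since $\mricc_f = {}^M\ricc + \Hess f$. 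Collecting the four translated terms yields \eqref{f-bochner}. An alternative, entirely intrinsic, route is to compute $\tfrac12\Delta|dv|^2$ by the classical pointwise argument and then subtract $\tfrac12\ps{\nabla f}{\nabla|dv|^2}$ directly, using $\ps{\nabla f}{\nabla|dv|^2} = 2\sum_i \hsmetr{Ddv(\nabla f,E_i)}{dv(E_i)}$ together with the commutation $Ddv(\nabla f,E_i) = Ddv(E_i,\nabla f)$ and the Weitzenböck-type rearrangement; I expect the warped-product bookkeeping to be cleaner and less error-prone, so that is the route I would write up.
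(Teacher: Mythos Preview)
Your argument is correct, and it is genuinely different from the paper's. The paper proves \eqref{f-bochner} intrinsically on $M$: it starts from the unweighted Bochner formula \eqref{bochner}, substitutes $\Delta_f = \Delta - \nabla f(\cdot)$ and $\tau_f v = \tau v - i_{\nabla f}dv$, and is then reduced to verifying the pointwise identity
\[
\hsmetr{dv}{d(i_{\nabla f} dv)} - \tfrac12\nabla f(|dv|_{HS}^2) = \hsmetr{dv(\nabla_{(\cdot)}\nabla f)}{dv},
\]
which it establishes by a local coordinate computation. This is exactly the ``alternative, entirely intrinsic, route'' you mention at the end and then set aside.

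Your warped-product route instead pushes everything to $\bm=M\times_{e^{-f}}\ttt$ and reads off \eqref{f-bochner} as the classical Bochner formula for $\bv$. The bookkeeping you outline is accurate: using the O'Neill formulas with a one-dimensional fiber one gets ${}^{\bm}\ricc(E_j,E_k)=\mricc_f(E_j,E_k)-\nabla_jf\,\nabla_kf$, the mixed Ricci vanishes, and $Dd\bv(E_n,E_n)=-dv(\nabla f)$ with the cross terms $Dd\bv(E_j,E_n)=0$; the resulting extra $|dv(\nabla f)|^2$ from $|Dd\bv|^2$ cancels exactly the $-\nabla f\otimes\nabla f$ contribution in the Ricci block, leaving $\mricc_f$. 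Conceptually this is appealing because it exhibits the weighted Bochner formula as a shadow of the unweighted one on $\bm$, in the same spirit as Proposition~\ref{prop_warp}. The trade-off is that you import the warped-product curvature identities as a black box, whereas the paper's computation stays entirely on $M$ and isolates the single algebraic identity responsible for the appearance of $\Hess f$; the latter is arguably more transparent if one later wants to track exactly where each piece of $\mricc_f$ enters (as the paper does in the vanishing arguments of Section~\ref{sec_vanishing}).
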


\begin{proof}
We start recalling the standard Bochner formula for the smooth map $v$, \cite{EL},
\begin{align}\label{bochner}
\frac12\Delta|dv|^2 
&= |Ddv|^2 
+ \hsmetr{dv}{d\tau v} 
+ \sum_{i=1}^m\nmetr{dv(\mricc(E_i,\cdot)^{\sharp})}{dv (E_i)}\\
&- \sum_{i,j=1}^m\nmetr{{}^N\operatorname{Riem}(dv (E_i),dv (E_j))dv (E_j)}{dv (E_i)} \nonumber.
\end{align}
Inserting
\[
\frac12\Delta_f|dv|_{HS}^2=\frac12\Delta|dv|_{HS}^2- \frac12\nabla f(|dv|_{HS}^2)
\]
and
\[
\hsmetr{dv}{d\tau_f v}=\hsmetr{dv}{d\tau v}- \hsmetr{dv}{d(i_{\nabla f} dv)}
\]
in \eqref{bochner} we get
\begin{align*}
\frac12\Delta_f|dv|^2 
&= |Ddv|^2 
+ \hsmetr{dv}{d\tau_f v} 
+ \sum_{i=1}^m\nmetr{dv(\mricc(E_i,\cdot)^{\sharp})}{dv (E_i)}\\
& + \hsmetr{dv}{d(i_{\nabla f} dv)} - \frac12\nabla f(|dv|_{HS}^2)
\\
&- \sum_{i,j=1}^m\nmetr{{}^N\operatorname{Riem}(dv (E_i),dv (E_j))dv (E_j)}{dv (E_i)} \nonumber.
\end{align*}
Hence \eqref{f-bochner} is proved once we show that 
\begin{equation}\label{hess}
\hsmetr{dv}{d(i_{\nabla f} dv)} - \frac12\nabla f(|dv|_{HS}^2) = \hsmetr{dv(\nabla_{(\cdot)}\nabla f)}{dv}. 
\end{equation}
Let $\{x^a\}_{a=1}^m$ be a local coordinate chart on $M$ at $q\in M$ and $\{\theta^A\}_{A=1}^n$ and $\{E_A\}_{A=1}^n$ orthonormal coframe and dual frame on $N$ at $v(q)$ respectively. Moreover denote the components of the metric on $M$ as $\mmetr{\frac{\partial}{\partial x^a}}{\frac{\partial}{\partial x^b}}=:g_{ab}$.
We will write in coordinates
\[
dv = v^A_a E_A\otimes dx^a,\qquad \textrm{and}\qquad \nabla f = f^a\frac{\partial}{\partial x^a}.
\]
Then
\[
i_{\nabla f}dv = v^A_af^a E_A,
\]
which gives
\[
d(i_{\nabla f}dv) = \left(v^A_{ab}f^b + v^A_bf^b_{\ a}\right) E_A \otimes dx^a
\]
and
\begin{equation}\label{hess_1}
\hsmetr{dv}{d(i_{\nabla f} dv)} = \sum_{A=1}^n v^A_dv^A_{ab}f^ag^{db}+ \sum_{A=1}^n v^A_dv^A_{a}f^a_{\ b}g^{db}. 
\end{equation}
Moreover, 
\[
\nabla_{(\cdot)}\nabla f = \left(f^a_{\ b}+ \Gamma^a_{bd}f^d\right)\frac{\partial}{\partial x^a}\otimes dx^b,
\]
from which
\[
dv(\nabla_{(\cdot)}\nabla f )= \left(v^A_af^a_{\ b}+ v^A_a\Gamma^a_{bd}f^d\right) E_A \otimes dx^b,
\]
and
\begin{equation}\label{hess_2}
\hsmetr{dv(\nabla_{(\cdot)}\nabla f )}{dv}= \sum_{A=1}^n v^A_af^a_{\ b}v^A_dg^{bd}+ v^A_av^A_c\Gamma^a_{bd}f^dg^{bd}.
\end{equation}
Here $\Gamma$ denote the Christhoffel's symbols on $M$, ${}^M\nabla_{\frac{\partial}{\partial x^a}}\frac{\partial}{\partial x^b}=:\Gamma_{ab}^c\frac{\partial}{\partial x^c}$. Finally,
\begin{align*}
\frac12\nabla f(|dv|_{HS}^2) &= \frac12 \sum_{A=1}^n \nabla f \left(\mmetr{\left(\theta^A\circ dv\right)^{\sharp}}{\left(\theta^A\circ dv\right)^{\sharp}}\right)\\
&=\sum_{A=1}^n \mmetr{\nabla_{\nabla f }\left(\theta^A\circ dv\right)^{\sharp}}{\left(\theta^A\circ dv\right)^{\sharp}}.
\end{align*}
Since 
\[
\left(\theta^A\circ dv\right)^{\sharp}=v^A_ag^{ab}\frac{\partial}{\partial x^b},
\]
we get
\[
\nabla_{\nabla f }\left(\theta^A\circ dv\right)^{\sharp}=\left(f^c \left(v^A_ag^{ab}\right)_c + f^av^A_dg^{dc}\Gamma_{ac}^b\right)\frac{\partial}{\partial x^b},
\]
and
\begin{equation}\label{hess_3}
\frac12\nabla f(|dv|_{HS}^2) = \sum_{A=1}^n v^A_bf^av^A_dg^{dc}\Gamma_{ac}^b + f^av^A_{da}g^{bd}v^A_b + f^a v^A_dv^A_b \left(\frac{\partial}{\partial x^a}g^{db}\right). 
\end{equation}
Combining \eqref{hess_1}, \eqref{hess_2} and \eqref{hess_3} we get that \eqref{hess} is proved provided
\begin{equation}\label{hess_4}
2\sum_{A=1}^n v^A_bf^av^A_dg^{dc}\Gamma_{ac}^b + f^a v^A_dv^A_b \left(\frac{\partial}{\partial x^a}g^{db}\right) = 0.
\end{equation}
Now, since the Levi--Civita connection is compatible with the metric, 
\[
\frac{\partial}{\partial x^a}g_{bc} = g_{dc}\Gamma^d_{ba}+g_{db}\Gamma^d_{ca}
\]
and
\[
\left(\frac{\partial}{\partial x^a}g^{bc}\right)g_{cd} = - g^{bc}\left(\frac{\partial}{\partial x^a}g_{cd}\right),
\]
from which
\[
\frac{\partial}{\partial x^a}g^{bd}= - g^{bc}\Gamma_{ca}^d- g^{cd}\Gamma_{ca}^b
\]
and, in turn,
\begin{align*}
&\sum_{A=1}^nf^a\left[ 2v^A_bv^A_dg^{dc}\Gamma_{ac}^b + v^A_dv^A_b \left(\frac{\partial}{\partial x^a}g^{db}\right)\right] 
\\&=
\sum_{A=1}^nf^a\left[ 2v^A_bv^A_dg^{dc}\Gamma_{ac}^b - v^A_bv^A_dg^{bc}\Gamma_{ac}^d - v^A_bv^A_dg^{dc}\Gamma_{ac}^b \right] 
= 0.
\end{align*}
This latter proves \eqref{hess_4} and concludes the proof.
\end{proof}

We are now ready to prove the following vanishing result.

\begin{theorem}\label{f-SY}
Let $M^m,N^n$ be complete Riemannian manifolds, $M^m$ non--compact, and $f\in C^{\infty}(M)$. Assume that $\mricc_f\geq 0$ and $\nsect\leq 0$. Consider an  $f$--harmonic map $v:M\to N$ with finite $f$--energy $E_f(v)<+\infty$. 
\begin{itemize}
\item[(\textbf{I})] Assume that at least one of the following assumption is satisfied
\begin{itemize}
\item[(a)] there exists a constant $C>0$ such that $|f|\leq C$;
\item[(b)] $f$ is convex and the set of its critical points is unbounded;
\item[(c)] $\vol_f(M):=\int_Me^{-f}dV_M=+\infty$;
\item[(d)] there is a point $q_0\in M$ such that $\mricc_f|_{q_0}>0$;
\item[(e)] there is a point $q_1\in M$ such that $\mricc(X,X)|_{q_1}\neq 0$ for all $0\neq X\in T_{q_1}M$.
\end{itemize}
Then $v$ is constant.
\item[(\textbf{II})]If $\nsect<0$ then either $v$ is constant or the whole image $v(M)$ is contained in a geodesic of $N$.
\end{itemize}
\end{theorem}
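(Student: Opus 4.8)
The plan is to transport the Bochner technique of Eells--Sampson and Schoen--Yau to the weighted setting, using the $f$--Bochner formula \eqref{f-bochner}. Since $\nsect\le0$, the (weakly) $f$--harmonic finite $f$--energy map $v$ is smooth by standard harmonic--map regularity (for instance by passing to $\bm=M\times_{e^{-f}}\ttt$ via Proposition \ref{prop_warp} and invoking the regularity of finite--energy harmonic maps into non--positively curved targets), so \eqref{f-bochner} of Proposition \ref{prop_f-bochner} applies. Plugging $\tau_fv=0$ into \eqref{f-bochner}, noting that the $\nriem$--term is $\ge0$ because $\nsect\le0$, and that
\[
\sum_{i=1}^m\nmetr{dv(\mricc_f(E_i,\cdot)^{\sharp})}{dv(E_i)}=\hsmetr{\mricc_f}{v^{\ast}g_N}\ge0
\]
since $\mricc_f\ge0$ and $v^{\ast}g_N\ge0$, we get $\frac12\Delta_f|dv|^2\ge|Ddv|^2\ge0$. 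By the Kato inequality $|\nabla|dv||\le|Ddv|$ (valid globally in the weak sense after the usual regularization $\sqrt{|dv|^2+\varepsilon^2}-\varepsilon$), the nonnegative function $\phi:=|dv|$ is weakly $f$--subharmonic on $M$. Since $E_f(v)<\infty$ means exactly $\phi\in L^2(M,e^{-f}dV_M)$, a Caccioppoli estimate for $\Delta_f$ finishes this stage: testing $\Delta_f\phi\ge0$ against $\psi^2\phi$ with a cut--off $\psi\equiv1$ on $B_R$, supported in $B_{2R}$, with $|\nabla\psi|\le2/R$, Cauchy--Schwarz gives
\[
\int_{B_R}|\nabla\phi|^2e^{-f}dV_M\le\frac{C}{R^2}\int_{B_{2R}\setminus B_R}\phi^2e^{-f}dV_M\le\frac{C}{R^2}E_f(v)\xrightarrow[R\to\infty]{}0,
\]
so $\nabla\phi\equiv0$, i.e.\ $|dv|\equiv c$ for a constant $c\ge0$ (this step uses $E_f(v)<\infty$ only). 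In case (c) this already concludes, since $E_f(v)=\frac12c^2\vol_f(M)<\infty$ together with $\vol_f(M)=\infty$ forces $c=0$.

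For the remaining cases I would exploit the equality now forced in \eqref{f-bochner}: as $|dv|^2\equiv c^2$, the three nonnegative terms $|Ddv|^2$, $\hsmetr{\mricc_f}{v^{\ast}g_N}$ and $-\sum_{i,j}\nmetr{\nriem(dv(E_i),dv(E_j))dv(E_j)}{dv(E_i)}$ all vanish identically. Hence $v$ is totally geodesic, $v^{\ast}g_N$ is parallel (so of constant rank), and $\operatorname{range}(\mricc_f^{\sharp})\subseteq\ker dv$ pointwise. Case (d) is immediate: $\mricc_f|_{q_0}>0$ gives $\ker dv|_{q_0}=T_{q_0}M$, so $c=|dv|(q_0)=0$. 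For case (e), suppose $v$ non--constant, so $c>0$ and $\operatorname{rank}dv\ge1$; then $\ker dv$ and $(\ker dv)^{\perp}$ are parallel distributions, and refining the de Rham decomposition along the eigenspaces of the parallel tensor $v^{\ast}g_N$ one obtains a nontrivial parallel distribution on which $v^{\ast}g_N$ is a constant multiple of $g_M$ and whose image under $dv$ spans, at each point, a subspace on which $N$ is flat (by the vanishing of the $\nriem$--term); the corresponding de Rham factor $F$ is therefore flat, so $\mricc\equiv\mricc_f\equiv0$ along $F$, making $\mricc|_{q_1}$ degenerate and contradicting the hypothesis of (e). Hence $c=0$. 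Cases (a) and (b) are reduced to (c) by showing their hypotheses force $\vol_f(M)=+\infty$ (for (a), via the weighted volume comparison for $\mricc_f\ge0$ with bounded potential on a non--compact $M$; for (b), via a closer analysis of the convex potential along its critical set), so again $c=0$.

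For part (II), the Caccioppoli step again gives $|dv|\equiv c$ and the equality analysis gives $Ddv=0$ and $\nmetr{\nriem(dv(E_i),dv(E_j))dv(E_j)}{dv(E_i)}=0$ for all $i,j$; when $\nsect<0$ strictly the latter forces $dv(E_i)$ and $dv(E_j)$ to be proportional for every $i,j$, i.e.\ $\operatorname{rank}dv\le1$ everywhere. If $c=0$ then $v$ is constant; if $c>0$ then $v$ is totally geodesic of rank one and its image — a connected $1$--dimensional totally geodesic submanifold of $N$ — is contained in a geodesic of $N$.

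I expect the delicate point to be the passage, in cases (a), (b) and (e), from ``$|dv|$ is a nonzero constant'' to a contradiction: this requires unwinding the parallel tensor $v^{\ast}g_N$ through the de Rham decomposition and using the steady--soliton/flatness structure of the resulting factor, together with the weighted volume--growth input in cases (a)--(b). The Caccioppoli argument itself is routine but demands the standard care near $\{|dv|=0\}$ and the justification of the integration by parts with respect to the measure $e^{-f}dV_M$.
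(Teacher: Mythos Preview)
Your overall architecture matches the paper's: weighted Bochner plus Kato plus a cut--off argument give $|dv|\equiv c$ and $Ddv\equiv 0$, then case~(c) is immediate and cases (a),(b) reduce to (c) via the Wei--Wylie $f$--volume growth theorems, case~(d) and part~(\textbf{II}) follow from the pointwise vanishing of the Bochner terms exactly as you wrote.

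The one place where you diverge from the paper is case~(e), and your route there is substantially harder than necessary. The paper does not invoke any de~Rham splitting or flatness of a factor. Instead it observes that once $Ddv\equiv 0$ the map $v$ is harmonic, hence $dv(\nabla f)=\tau_f v-\tau v=0$; then for every $X$,
\[
dv\bigl({}^M\nabla_X\nabla f\bigr)={}^N\nabla_{dv(X)}dv(\nabla f)-Ddv(X,\nabla f)=0,
\]
so the $\Hess f$--part of $\mricc_f$ already has range in $\ker dv$. Since $\hsmetr{\mricc_f}{v^{\ast}g_N}=0$ with $\mricc_f\ge0$ forces $\operatorname{range}(\mricc_f^{\sharp})\subseteq\ker dv$ as well, subtraction gives $\operatorname{range}(\mricc^{\sharp})\subseteq\ker dv$. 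Picking any nonzero $X\in(\ker dv)^{\perp}$ at $q_1$ then yields $\mricc(X,X)|_{q_1}=0$, contradicting the hypothesis. This is a two--line argument once $Ddv=0$ is known.

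Your de~Rham route can in principle be completed, but as written it has gaps: you need the local (not global) de~Rham theorem since $M$ is not assumed simply connected; you need to split along the eigenspaces of $v^{\ast}g_N$ (not just $\ker dv$ versus $(\ker dv)^{\perp}$) to make $v$ restricted to a leaf a homothetic immersion; and the flatness of the factor then comes from the Gauss equation for the totally geodesic immersion together with the vanishing of \emph{all} sectional curvatures of $N$ along $dv$--planes (which does follow from the $\nriem$--term vanishing for every orthonormal frame, but you should say so). Only after all this do you get ${}^M\mricc|_{TF}={}^F\mricc=0$. The paper's trick with $dv(\nabla f)=0$ bypasses all of this structure.
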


\begin{remark}{\rm The case of $M$ compact is in \cite{L}, p. 367. See also \cite{SY-CH} for the non--compact harmonic case.}\end{remark}

We partially follow the proof given in \cite{PV} for solutions of generic Bochner--type inequalities. 

\begin{proof}
Set $\phi:=|dv|_{HS}$ and $G(v):=|Ddv|^2-|\nabla|dv||^2$. Notice that 
\[
\frac12\Delta_f\phi^2 = \phi\Delta_f\phi + |\nabla\phi|^2. 
\]
Since $v$ is $f$--harmonic, by the Bochner formula \eqref{f-bochner} and the Kato's inequality we get
\begin{align}\label{aus1}
\phi\Delta_f\phi \geq G(v) \geq 0.
\end{align}
Let $\rho\in C^{\infty}_c(M)$ to be chosen later. Then 
\[
\int_M\rho^2\phi\Delta_f\phi e^{-f}dV_M \geq \int_M\rho^2 G(v) e^{-f}dV_M \geq 0.
\]
Stokes' Theorem and Young's inequality yield
\begin{align*}
&\int_M\rho^2\phi\Delta_f\phi e^{-f}dV_M\\
&= - \int_M2\rho\phi\mmetr{\nabla\rho}{\nabla\phi}e^{-f}dV_M
- \int_M\rho^2|\nabla\phi|^2e^{-f}dV_M\\
&\leq \epsilon^{-1} \int_M \phi^2|\nabla\rho|^2e^{-f}dV_M - (1-\epsilon) \int_M\rho^2|\nabla\phi|^2e^{-f}dV_M
\end{align*}
for any $0<\epsilon<1$. From this latter we get
\begin{align}\label{caccioppoli}
0&\leq (1-\epsilon) \int_M\rho^2|\nabla\phi|^2e^{-f}dV_M + \int_M\rho^2 G(v) e^{-f}dV_M\\
&\leq \epsilon^{-1} \int_M \phi^2|\nabla\rho|^2e^{-f}dV_M\nonumber.
\end{align}
Choose the smooth cut--off $\rho=\rho_R$ s.t. $\rho\leq 1$ on $M$, $\rho|_{B_R}\equiv 1$, $\rho|_{M\setminus B_{2R}}\equiv 0$ and $|\nabla\rho|\leq 2/R$. Replacing $\rho=\rho_R$ in \eqref{caccioppoli} we obtain
\begin{align}\label{caccioppoli_R}
0&\leq (1-\epsilon) \int_{B_R}|\nabla\phi|^2e^{-f}dV_M + \int_{B_R} G(v) e^{-f}dV_M\\
&\leq \frac{4\epsilon^{-1}}{R^2}\int_{B_{2R}} \phi^2e^{-f}dV_M\nonumber.
\end{align}
By the assumption $E_f(v)<+\infty$ we can let $R\to\infty$ applying monotone convergence at RHS to get $\nabla\phi\equiv 0$, i.e. $\phi=|dv|\equiv const.$, and 
\begin{equation}\label{totgeo}
0\equiv G(v)=|Ddv|^2-|\nabla\phi|^2=|Ddv|^2.
\end{equation}
Suppose $|dv|\equiv C>0$. Then the finiteness of the $f$--energy of $v$ gives that $\vol_f(M)<+\infty$. If either $|f|$ is uniformly bounded or $f$ is convex and the set of its critical points is unbounded, then Theorem 1.3 and $5.3$ in \cite{WW-JDG} implies that $M$ has at least linear $f$--volume growth, giving a contradiction. 

In general we have $Ddv\equiv 0$, i.e. $v:M\to N$ is totally geodesic, which in turn gives that $v$ is harmonic, i.e. $\tau v=0$ and 
\begin{equation}\label{if}
i_{\nabla f}dv=dv(\nabla f)=\tau_fv-\tau v\equiv 0.
\end{equation}
Accordingly, the Bochner formula \eqref{f-bochner} reads
\begin{align}\label{f-bochner-v}
0 
&= \sum_{i=1}^m\nmetr{dv(\mricc_f(E_i,\cdot)^{\sharp})}{dv (E_i)}\\ 
&- \sum_{i,j=1}^m\nmetr{{}^N\operatorname{Riem}(dv (E_i),dv (E_j))dv (E_j)}{dv (E_i)} \nonumber, 
\end{align}
and by the curvature sign assumptions both
\begin{equation}\label{ric_f}
\sum_{i=1}^m\nmetr{dv(\mricc(E_i,\cdot)^{\sharp})}{dv (E_i)}+ \sum_{i=1}^m\nmetr{dv({}^M\nabla_{E_i}\nabla f)}{dv (E_i)}=0
\end{equation}
and
\[
\sum_{i,j=1}^m\nmetr{{}^N\operatorname{Riem}(dv (E_i),dv (E_j))dv (E_j)}{dv (E_i)}=0. 
\]
First, suppose that $\nsect<0$, then $dv(E_i)\parallel dv(E_j)$ for all $i,j=1,\dots,n$ and we conclude that $v(M)$ must be contained in a geodesic of $N$.

On the other hand, suppose that $\mricc_f|_{q_0}>0$ at some point $q_0\in M$. Then necessarily $dv(q)=0$ which gives $dv\equiv0$.

Moreover, since
\[
0= Ddv(X,Y)=(D_Ydv)(X)={}^N\nabla_{dv(Y)}dv(X)-dv({}^M\nabla_YX)
\]
for all $X,Y$ vector fields on $M$, \eqref{if} implies
\begin{equation*}
\nmetr{dv({}^M\nabla_{E_i}\nabla f)}{dv (E_i)}=\nmetr{{}^N\nabla_{dv(E_i)}dv(\nabla f)}{dv(E_i)}=0
\end{equation*}
for each $i=1,\dots,m$. Since $\mricc_f\geq 0$, \eqref{ric_f} in particular gives
\[
\nmetr{dv(\mricc(E_i,\cdot)^{\sharp})}{dv (E_i)}=0
\]
for each $i=1,\dots,m$. Hence, if there exists some point $q_1\in M$ such that $\mricc(X,X)|_{q_1}\neq 0$ for all vector $0\neq X\in T_{q_1}M$, then $v$ is once again necessarily constant.
\end{proof}

\begin{remark}\label{linear}{\rm Theorem \ref{f-SY} holds also in the more general assumption
\[
\int_{B_R}|dv|^2e^{-f}dV_M = o(R),
\]
instead of $E_f(v)<+\infty$.
}\end{remark}

\begin{remark}\label{ric_diff}{\rm One could combine Proposition \ref{prop_conf} or Proposition \ref{prop_warp} above with the vanishing result in the harmonic case to obtain directly a vanishing result for $f$--harmonic maps. Nevertheless in this case the assumptions on ${}^M\ricc$ are more involved and less natural in view of the applications to smooth metric measure spaces and gradient Ricci solitons. Namely, one has that
\[
{}^{\widetilde M}\ricc= {}^M\ricc + \left[\Hess f + \frac{df\otimes df}{m-2}\right]+\frac1{m-2}\left[\Delta f- |df|^2\right]g_{M}
\]
where $\widetilde M = (M,e^{-\frac{2f}{m-2}}g_{M})$, while 
\begin{equation*}
{}^{\bar{M}}\ricc(X,X)= {}^M\ricc(X,X) + \Hess f(X,X) - g_{M}(X,\nabla f)^2,
\end{equation*}
for $X\in TM<T\bar M$, and
\[
{}^{\bar{M}}\ricc (\nu,\nu)= \Delta f - |df|^2 = \Delta_ff,
\]
for $\nu\in T\mathbb T<T\bar M$.
}\end{remark}

\begin{remark}\label{nontriv}{\rm
The statement of Theorem \ref{f-SY} is non--trivial, as shown in the following example. 

Let $\ttt^n$ be the standard flat $n$--torus $\rr^n/\mathbb{Z}^n$. For the easiness of notation, here we parameterize the torus as $\ttt^n=[-1,1]^n$ with the usual identifications on the boundaries. Consider a map $v:\rr^2\times\ttt=M\to N=\ttt^3$ and a function $f:\rr^2\times\ttt=M\to\rr$ given by $v(x,y,[z]):=([0],[0],[z])$ and $f(x,y,[z]):=\frac{x^2+y^2}{2}$. Then $^{M}\ricc_f=dx\otimes dx+dy\otimes dy\geq 0$, and since $dv=dz\otimes\frac{\partial}{\partial z}$ is parallel, we have that $\tau v =0$. Moreover $df = xdx+ydy$ implies $\tau_fv=dv(\nabla f)=0$. As a consequence $v$ is a non--constant $f$--harmonic map (notably in a non--trivial homotopy class of maps), and in fact none of the assumptions (a) to (e) of Theorem \ref{f-SY} is satisfied. In particular, (a) $f$ is upper unbouded, (b) $f$ is convex but with a bounded set of critical points, (c) the $f$--volume of $M$ satisfies
\[
\vol_f(\rr^2\times\ttt)=\int_{-1}^1\int_{\rr^2}e^{-\frac{x^2+y^2}2}dxdydz =4\pi<+\infty,
\] 
(d) $\mricc_f(\frac{\partial}{\partial z},\frac{\partial}{\partial z})\equiv0$ and (e) $\mricc\equiv 0$.
}\end{remark}
\begin{remark}{\rm
A similar example shows that the weaker assumption suggested in Remark \ref{linear} is sharp. In particular let $M$, $N$ and $v$ be as in Remark \ref{nontriv} but now choose $f(x,y,[z])=\frac{x^2}2$. With these choices, $\mricc_f(M)\geq 0$ and assumption (b) of Theorem \ref{f-SY} is satisfied, but the $f$--volumes of geodesic balls have exactly linear growth and in fact $v$ is once again a non--constant $f$--harmonic map.
}\end{remark}
Combining Theorem \ref{f-burstall} and Theorem \ref{f-SY} we get the following result, which corresponds to the cases (\textbf{I}) and (\textbf{III}) of Theorem \ref{th_main}. 

\begin{theorem}\label{f-homot}
Let $M$ be a complete non--compact Riemannian manifold and $N$ a compact Riemannian manifold with $\nsect\leq0$. Let $f\in C^\infty(M)$ and suppose $\mricc_f\geq 0$. Consider a continuous map $u:M\to N$ with finite $f$--energy $E_f(u)< +\infty$. Then $u$ is homotopic to a constant provided at least one of the assumptions \textit{(a)} to \textit{(e)} of Theorem \ref{f-SY} is satisfied.

On the other hand, if we assume that $\nsect<0$, then $u$ is homotopic either to a constant or to a totally geodesic map whose image is contained in a geodesic of $N$.
\end{theorem}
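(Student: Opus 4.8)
The plan is simply to chain together the two main results already established in Sections \ref{sec_exis} and \ref{sec_vanishing}. First I would invoke Theorem \ref{f-burstall}: since $N$ is compact with $\nsect\leq 0$ and $u:M\to N$ is continuous with $E_f(u)<+\infty$, the homotopy class of $u$ contains a smooth $f$--harmonic map $v:M\to N$ which minimizes the $f$--energy in that class. The only point worth spelling out here is that this minimality gives $E_f(v)\leq E_f(u)<+\infty$, so that $v$ meets all the standing hypotheses of Theorem \ref{f-SY} (complete $M$, $N$ non--compact is not needed since $M$ non--compact is, $f\in C^\infty(M)$, $\mricc_f\geq 0$, $\nsect\leq 0$, and finite $f$--energy).

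Next I would apply part (\textbf{I}) of Theorem \ref{f-SY}: under $\mricc_f\geq 0$ together with any one of the conditions (a)--(e), the $f$--harmonic finite--energy map $v$ is forced to be constant. Since $u$ is homotopic to $v$, this immediately yields that $u$ is homotopic to a constant, which is the first assertion.

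For the final assertion I would instead add the hypothesis $\nsect<0$ and apply part (\textbf{II}) of Theorem \ref{f-SY}, which gives that either $v$ is constant or $v(M)$ is contained in a geodesic of $N$. In the latter case the proof of Theorem \ref{f-SY} in fact produces $Ddv\equiv 0$ (cf.\ \eqref{totgeo}), i.e.\ $v$ is totally geodesic; combined with $v(M)\subset$ a geodesic, this is exactly the claimed alternative. Hence $u$ is homotopic either to a constant or to a totally geodesic map whose image is contained in a geodesic of $N$.

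I do not expect any genuine obstacle in this argument: all the analytic content has already been handled, namely the construction of the smooth energy--minimizing $f$--harmonic representative via the warped--product reduction $\bm=M\times_{e^{-f}}\ttt$ (Proposition \ref{prop_warp} and Theorem \ref{f-burstall}) and the Bochner--Kato--Caccioppoli vanishing scheme (Proposition \ref{prop_f-bochner} and Theorem \ref{f-SY}). The proof of Theorem \ref{f-homot} is therefore purely a matter of assembling these pieces, the sole subtlety being the verification that the representative $v$ has finite $f$--energy, which follows at once from its minimality.
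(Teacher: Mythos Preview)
Your proposal is correct and matches the paper's approach exactly: the paper simply states that Theorem \ref{f-homot} follows by combining Theorem \ref{f-burstall} and Theorem \ref{f-SY}, and your write-up spells this out precisely, including the observation that $E_f(v)\leq E_f(u)<+\infty$ and that $Ddv\equiv 0$ from \eqref{totgeo} supplies the ``totally geodesic'' part of the second alternative.
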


\section{Spectral assumptions}\label{SpectSect}

In this section we suppose that the underlying manifold $M$ satisfies 
\begin{equation*}
\mricc_f\geq- k^2(x)
\end{equation*}
for some function $k(x)$, provided 
\begin{equation}\label{f-spect-1}
\lambda_1(-\Delta_f-Hk^2)\geq 0
\end{equation} 
for some $H>1$. As pointed out in Remark \ref{rmk_rai}, this corresponds to ask that
\begin{equation*}
\int_M|\nabla\varphi|^2e^{-f}dV_M-\int_MHk^2\varphi^2e^{-f}dV_M\geq 0
\end{equation*}
for all $\varphi\in C^{\infty}_c(M)$.
On the other hand, it was observed in \cite{V,BPS} that \eqref{f-spect-1} is also equivalent to ask that
\[
\lambda_1\left(-\Delta-\left(\frac{1}2\Delta f - \frac14|\nabla f|^2+Hk^2\right)\right)\geq 0.
\]
Proceeding as in \cite{PV}, we can show that Theorem \ref{f-SY} and Theorem \ref{f-homot} hold in these more general assumptions. The result thus obtained, together with Theorem \ref{f-homot}, will complete the proof of Theorem \ref{th_main}. 

\begin{theorem}\label{f-SY_spectr}
Let $M^m,N^n$ be complete Riemannian manifolds, $M^m$ non--compact, and $f\in C^{\infty}(M)$. Assume that $\mricc_f\geq -k^2(x)$ for some function $k(x)\not\equiv 0$ satisfying \eqref{f-spect-1} and that $\nsect\leq 0$. Then the following hold
\begin{itemize}
\item [(i)] Any smooth $f$--harmonic map $v:M\to N$ with finite $f$--energy $E_f(v)<+\infty$ is constant.
\item [(ii)] If $N$ is compact, then any continuous map $u:M\to N$ with finite $f$--energy $E_f(u)< +\infty$ is homotopic to a constant.
\end{itemize}
\end{theorem}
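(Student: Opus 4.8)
The plan is to replicate the Bochner–Kato argument of the proof of Theorem \ref{f-SY}, but now with a weighted Caccioppoli-type inequality that absorbs the negative curvature term using the spectral gap hypothesis \eqref{f-spect-1}, exactly as in \cite{PV} for the non-weighted case. Set $\phi:=|dv|_{HS}$. Since $v$ is $f$--harmonic, the Bochner formula \eqref{f-bochner}, the curvature bounds $\mricc_f\geq -k^2$ and $\nsect\leq 0$, together with Kato's inequality, give the differential inequality
\[
\phi\Delta_f\phi \geq |Ddv|^2 - |\nabla\phi|^2 - k^2\phi^2 \geq -k^2\phi^2,
\]
in the weak sense. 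Testing against $\rho^2\phi$ with $\rho\in C^\infty_c(M)$, integrating by parts against the measure $e^{-f}dV_M$, and using Young's inequality as in \eqref{caccioppoli}, one obtains for every $0<\epsilon<1$
\[
\int_M\rho^2 G(v)e^{-f}dV_M + (1-\epsilon)\int_M\rho^2|\nabla\phi|^2 e^{-f}dV_M \leq \epsilon^{-1}\int_M\phi^2|\nabla\rho|^2 e^{-f}dV_M + \int_M k^2\rho^2\phi^2 e^{-f}dV_M,
\]
where $G(v):=|Ddv|^2-|\nabla\phi|^2\geq 0$.

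The key new step is to control the last term using the spectral assumption. Applying \eqref{ray} with the test function $\rho\phi$ (which requires a routine cut-off/density argument, since $\phi$ is only Lipschitz and $\rho\phi\in W^{1,2}_c$), one gets
\[
H\int_M k^2\rho^2\phi^2 e^{-f}dV_M \leq \int_M |\nabla(\rho\phi)|^2 e^{-f}dV_M \leq (1+\delta)\int_M\rho^2|\nabla\phi|^2 e^{-f}dV_M + C_\delta\int_M\phi^2|\nabla\rho|^2 e^{-f}dV_M
\]
for any $\delta>0$. Since $H>1$, for $\epsilon$ and $\delta$ small enough the terms $\int\rho^2|\nabla\phi|^2 e^{-f}dV_M$ combine with a strictly positive coefficient on the left-hand side, yielding
\[
a\int_M\rho^2|\nabla\phi|^2 e^{-f}dV_M + \int_M\rho^2 G(v)e^{-f}dV_M \leq C\int_M\phi^2|\nabla\rho|^2 e^{-f}dV_M
\]
for some constant $a>0$. (This is where the lower bound $H>1$ is essential, and where the bookkeeping is most delicate; an alternative is to first pass, via \cite{V,BPS}, to the equivalent non-weighted spectral condition and invoke \cite{PV} directly, but I would carry the estimate out in the weighted language for consistency with the rest of the paper.)

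Now choose the standard cut-offs $\rho=\rho_R$ with $\rho_R|_{B_R}\equiv 1$, $\supp\rho_R\subset B_{2R}$ and $|\nabla\rho_R|\leq 2/R$. The right-hand side is bounded by $(16C/R^2)\int_{B_{2R}}\phi^2 e^{-f}dV_M \leq (16C/R^2)\cdot 2E_f(v)\to 0$ as $R\to\infty$, using $E_f(v)<+\infty$. Hence $\nabla\phi\equiv 0$ and $G(v)\equiv 0$, so $|dv|$ is constant and, by the same computation as in \eqref{totgeo}, $Ddv\equiv 0$. If $|dv|\equiv C>0$ then $\int_M\phi^2 k^2 e^{-f}dV_M = C^2\int_M k^2 e^{-f}dV_M$ would have to equal zero by feeding $\nabla\phi\equiv0$ and $G(v)\equiv 0$ back into the inequality chain (the spectral inequality forces $\int k^2\rho^2\phi^2e^{-f}=0$ for all $\rho$), forcing $k\equiv 0$, contrary to hypothesis; therefore $v$ is constant, proving (i). For (ii), when $N$ is compact with $\nsect\leq 0$, Theorem \ref{f-burstall} provides a smooth $f$--harmonic map $u_0$ minimizing the $f$--energy in the homotopy class of $u$, so in particular $E_f(u_0)\leq E_f(u)<+\infty$; by part (i) $u_0$ is constant, hence $u$ is homotopic to a constant. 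The main obstacle is the careful absorption argument in the second paragraph — making the constants work out with $H>1$ and justifying the use of \eqref{ray} with the non-smooth test function $\rho\phi$ via approximation.
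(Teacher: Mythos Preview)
Your proposal is correct and follows essentially the same route as the paper: Bochner--Kato yields $\phi\Delta_f\phi+k^2\phi^2\geq G(v)\geq 0$, the spectral inequality \eqref{ray} applied with $\varphi=\rho\phi$ absorbs the $k^2$--term (this is where $H>1$ enters), and letting $R\to\infty$ gives $\nabla\phi\equiv 0$, $G(v)\equiv 0$; part (ii) then follows from (i) via Theorem \ref{f-burstall}. The only cosmetic difference is in ruling out $|dv|\equiv C>0$: the paper observes abstractly that the spectral hypothesis forces $\vol_f(M)=+\infty$ (hence $E_f(v)=\tfrac{C^2}{2}\vol_f(M)=+\infty$, contradiction), whereas you feed $\phi\equiv C$ back into \eqref{ray} with the cut--offs $\rho_R$ and use $\vol_f(M)<+\infty$ to force $\int_M k^2 e^{-f}=0$ --- these are the same computation, and your parenthetical ``for all $\rho$'' should really read ``in the limit $R\to\infty$''.
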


\begin{proof}
First we note that, thanks to Theorem \ref{f-burstall}, (i) trivially implies (ii).

To prove (i), we define objects as in the proof of Theorem \ref{f-SY}. Choosing $\varphi=\rho\phi$, \eqref{ray} yields that
\begin{align*}
\int_Mk^2\rho^2\phi^2e^{-f}dV_M &\leq (1+\delta)H^{-1}\int_M \rho^2|\nabla\phi|^2 e^{-f}dV_M \\
&+ (1+\delta^{-1})H^{-1}\int_M \phi^2|\nabla\rho|^2 e^{-f}dV_M,
\end{align*}
for any fixed $\delta>0$. Proceeding as in the proof of Theorem \ref{f-SY} we note that inequalities \eqref{aus1} and \eqref{caccioppoli} are respectively replaced by
\begin{align*}
\phi\Delta_f\phi + k^2\phi^2 \geq G(v) \geq 0.
\end{align*}
and
\begin{align*}
0&\leq (1-\epsilon-H^{-1}(1+\delta)) \int_M\rho^2|\nabla\phi|^2e^{-f}dV_M + \int_M\rho^2 G(v) e^{-f}dV_M\\
&\leq (\epsilon^{-1}+H^{-1}(1+\delta^{-1})) \int_M \phi^2|\nabla\rho|^2e^{-f}dV_M\nonumber,
\end{align*}
up to choose $\delta$ small enough. From now on, the proof is the same as for the case $k\equiv0$. We have only to remark that the spectral assumption implies that $M$ has infinite weighted volume, as observed in Remark 1.8 (b) in \cite{PRRS}. In fact, by contradiction, if $\vol_f(M)<+\infty$ (more generally if it has at most linear growth), then in \eqref{ray} we can choose $\varphi=\varphi_R$ to be a family of cut--offs such that $\varphi_R\leq 1$, $\varphi_R|_{B_R}\equiv1$, $\varphi_R|_{M\setminus B_{2R}}\equiv 0$ and $|\nabla\varphi_R|\leq 2/R$, and letting $R\to\infty$ we obtain that $k\equiv 0$.
\end{proof}

\begin{remark}\label{rmk_kato}{\rm
With respect to the standard $f=0$ case, here we need $H>1$ because no refined Kato inequality is given for $f$--harmonic maps. We recall that for a smooth harmonic map $v$, the refined Kato inequality is the relation, \cite{Br,CGH}
\[
|Ddv|^2-|\nabla|dv||^2\geq K |\nabla|dv||^2 
\] 
holding with $K=1/(m-1)$. As a matter of fact, it turns out that such a constant $K>0$ can not exist for general $f$--harmonic maps. To see this, consider functions $v,f:\rr^3\to\rr$ given by 
\[
v(x,y,z):=\int_0^x e^{-\frac{t^2}2}dt,\qquad f(x,y,z)=-\frac{x^2}2.
\]
We have that 
\[
\Delta v = \frac{\partial^2 v}{\partial x^2} = -xe^{-\frac{x^2}{2}} 
\] 
and
\[
\left\langle \nabla f, \nabla v\right\rangle = \left\langle-x \frac{\partial}{\partial x}, e^{-\frac{x^2}{2}}\frac{\partial}{\partial x}\right\rangle = - xe^{-\frac{x^2}{2}},
\]
so that 
\[
\tau_f v =\Delta v-\left\langle \nabla f,\nabla v\right\rangle=0.
\]
On the other hand $d|\nabla v|= -xe^{-\frac{x^2}{2}}dx$ and $Ddv=-xe^{-\frac{x^2}{2}}dx^2$, which implies
\[
|Ddv|^2 \equiv |\nabla|dv||^2.
\]
}\end{remark}

\section{Geometric context and applications}\label{main}

The importance of studying topological rigidity properties of smooth metric measure spaces under curvature restrictions arises from the need to understand the topology of gradient Ricci solitons. 

Concerning the shrinking side, i.e. $\ricc_f=\lambda g$ for some constant $\lambda>0$, we mention that it is not difficult to see that the full conclusion of the classical Myers--Bonnet theorem cannot be extended to $\ricc_f$. Indeed the Gaussian space $(\mathbb{R}^m, \left\langle \,,\,\right\rangle_{can}, e^{-|x|^2/2}dV_{\mathbb{R}^m})$ is a non--compact, complete smooth metric measure space with $\ricc_f=1>0$. In order to recover compactness we have to impose, besides the positive constant lower bound on $\ricc_f$, further conditions on the growth of $f$ or on its gradient, see \cite{FLGR-Cmp,ELNM}. Nevertheless, as initially investigated in works of M. Fern\'andez--L\'opez and E. Garc\'ia--R\'io in the compact case, \cite{FLGR-Cmp}, and later in the complete non--compact case by W. Wylie, \cite{W}, a close relationship between $\ricc_f$ and the fundamental group of a smooth metric measure space still exists. Namely, Myers--type results in this context establish the finiteness of the fundamental group if $\ricc_f\geq c^2>0$ (and in particular for shrinking gradient Ricci solitons). Let us also mention that very recently in \cite{PRRS} these results were extended in the direction of the classical Ambrose theorem to complete smooth metric measure spaces $(M^m, g_{M}, e^{-f}dV_M)$ such that, fixed a point $o\in M$, for every unit speed geodesic $\gamma$ issuing from $\gamma(0)=o$ we have
\begin{equation*}
(i)\,\,\mricc_f(\dot{\gamma},\dot{\gamma})\geq\mu\circ\gamma+\mmetr{\nabla g\circ\gamma}{\dot{\gamma}},\qquad(ii)\,\,
\int_{0}^{+\infty}\mu\circ\gamma(t)dt=+\infty,
\end{equation*}
fore some functions $\mu\geq0$ and $g$ bounded. For more details see Theorem 9.1 in \cite{PRRS}.

We now come to analyze the topology of smooth metric measure spaces $(M^m,$ $g_{M}, e^{-f}dV_M)$ with $\mricc_f\geq0$. Altough we are interested in particular to the study of topological properties ``in the small'' we mention that very recently some results regarding the topology at infinity of smooth metric measure spaces with $\mricc_f\geq0$ and in particular of gradient steady Ricci solitons were obtained by O. Munteanu and J. Wang in \cite{MW1}. In particular, they prove that steady gradient Ricci solitons are either connected at infinity or they are isometric to a Ricci flat cylinder.

The following proposition encloses some known topological results in case $\mricc_f\geq 0$ which are particularly relevant to our investigation. These are obtained in \cite{Li, WW-JDG, Ya} and adapting to weighted setting a result in \cite{So}. Note that, as we observed in Remark \ref{Wu..}, the potential function of a non--trivial steady gradient Ricci soliton must be unbounded. Thus, except for the first part of point (iv), none of these results apply to non--trivial steady gradient Ricci solitons. This fact attaches importance to our Corollary \ref{coro_steady}.
\begin{proposition}\label{th_knowntopo}
Let $(M^m, g_M, e^{-f}dV_M)$ be an $m$--dimensional complete smooth metric measure space, then the following hold: 
\begin{enumerate}
\item[(i)] if $M$ is compact, $\mricc_f\geq0$ and $\mricc_f>0$ at one point then $|\pi_1(M)|<\infty$;
\item[(ii)] if $M$ is non--compact, $\mricc_f\geq0$, and $|f|$ is bounded, then $M$ either satisfies the loops to infinity property or has a double covering which splits. In particular if $\mricc_f>0$ then $M$ satisfies the loops to infinity property;
\item[(iii)] if $M$ is non--compact with $\mricc_f\geq 0$, $|f|$ is bounded and $D\subset M$ is a precompact set with simply connected boundary, then $\pi_1(D)$ can only contain elements of order $2$.
\item[(iv)] if $M$ is non--compact and $\mricc_f\geq0$, and either $f$ is a convex function and attains its minimum or $|f|$ is bounded, then $b_1(M)\leq m$.
\end{enumerate}
\end{proposition}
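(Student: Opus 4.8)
The plan is to treat the four items separately, since each is the weighted counterpart of a classical fact available in the cited literature; only items (ii) and (iii) require real work, namely porting Sormani's loops--to--infinity machinery to the Bakry--\'Emery setting.

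For (i) I would invoke the weighted Bochner argument: on a compact $M$ with $\mricc_f\geq 0$, every $f$--harmonic $1$--form is parallel (integrate $\tfrac12\Delta_f|\omega|^2=|\nabla\omega|^2+\mricc_f(\omega^\sharp,\omega^\sharp)+\langle\omega,\Delta_f\omega\rangle$ against $e^{-f}dV_M$), so $\mricc_f>0$ at one point forces $b_1(M)=0$; together with the weighted structure theory for compact manifolds of nonnegative Bakry--\'Emery Ricci tensor this gives $|\pi_1(M)|<\infty$. This is \cite{Li}; see also \cite{WW-JDG}. For (iv) the same $f$--Bochner formula, now on the non--compact $M$, shows that any finite--$f$--energy $f$--harmonic $1$--form is parallel when $\mricc_f\geq 0$; the hypotheses ``$f$ convex attaining its minimum'' or ``$|f|$ bounded'' are precisely what is needed (via $f$--parabolicity or a suitable exhaustion, cf. \cite{WW-JDG}) in order to represent $H^1(M;\mathbb{R})$ by such forms, after which the number of linearly independent parallel $1$--forms on $M$ is at most $m=\dim M$. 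This is the content of the first Betti number estimate in \cite{WW-JDG}, generalizing \cite{Ya}.

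The substantive part is (ii) and (iii). Here I would reread Sormani's proof in \cite{So} and isolate the two places where the Ricci lower bound enters: (1) the Cheeger--Gromoll splitting theorem, which produces the isometric splitting (or the splitting double cover) of the dichotomy, and (2) the Abresch--Gromoll excess estimate together with the ``halfway lemma'' and the ``uniform cut lemma'', all of which rest solely on the Laplacian comparison $\Delta r\leq (n-1)/r$. For (1) I would substitute the weighted splitting theorem valid when $\mricc_f\geq 0$ and $|f|$ is bounded: a line then forces $M\cong\mathbb{R}\times N$ isometrically, with $f$ constant along the $\mathbb{R}$--factor. For (2), under $\mricc_f\geq 0$ and $|f|\leq C$ one has an $f$--Laplacian comparison of the form $\Delta_f r\leq c(m,C)/r$ (Wei--Wylie, \cite{WW-JDG}), and each of Sormani's quantitative lemmas goes through verbatim with this replacement, the bound on $f$ being exactly what keeps the resulting estimates uniform along rays; this is why (ii)--(iii) need $|f|$ bounded and not merely $\mricc_f\geq 0$. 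Re-establishing Sormani's dichotomy in the weighted setting gives (ii); and (iii) follows from (ii) as in Sormani's corollary: if $D$ is precompact with simply connected (hence connected) boundary separating $D$ from the ends, an element of $\pi_1(D)$ of order $>2$ would, via the loops--to--infinity property, be represented by a loop slidable off $\bar D$ along a ray, which is incompatible both with that separation and with the structure of $\pi_1(M)$ in the splitting branch, forcing every element of $\pi_1(D)$ to have order dividing $2$.

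The main obstacle I expect is the bookkeeping in step (2): one must check that no step of Sormani's argument secretly uses a volume comparison in a normalized form that would degrade under $|f|\leq C$, and that replacing $(n-1)/r$ by $c(m,C)/r$ affects only constants and not the qualitative conclusions. Once this audit is complete, the topological deductions in (ii) and (iii) are formal.
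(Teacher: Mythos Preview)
The paper does not supply a proof of this proposition: it is presented purely as a summary of known results, attributed in the surrounding text to \cite{Li, WW-JDG, Ya} together with an adaptation ``to weighted setting [of] a result in \cite{So}'', with no argument given. Your proposal thus goes well beyond what the paper itself provides, and the references you attach to each item---(i) to \cite{Li}, (iv) to \cite{WW-JDG} generalizing \cite{Ya}, and (ii)--(iii) to a weighted adaptation of Sormani \cite{So} via the comparison geometry of \cite{WW-JDG}---are precisely those the paper cites. In that sense your plan is fully consistent with the paper's treatment; there is simply no in--paper proof to compare against.

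One minor remark on your sketch of (i): the Bochner step yields $b_1(M)=0$, but that alone does not force $|\pi_1(M)|<\infty$; the passage you call ``weighted structure theory'' must really invoke the weighted splitting theorem on the universal cover (where $\tilde f=f\circ P_M$ is automatically bounded since $M$ is compact), after which $\mricc_f>0$ at a point rules out any Euclidean factor and makes $\tilde M$ compact directly. With that understood, your outline for all four items is sound.
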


As a first application of their vanishing result for the harmonic representative, R. Schoen and S.--T. Yau, in \cite{SY-CH}, studied the topology of manifolds with non--negative Ricci curvature. We can now naturally generalize their work to the case of manifolds with non--negative Bakry--\'Emery Ricci curvature. Contrary to all the results contained in Proposition \ref{th_knowntopo}, the technique used here can be extended, under the validity of the spectral assumption \eqref{f-spect} on $\Delta_f$, also to the more general case $\mricc_f\geq -k^2$. In particular as a consequence of Theorem \ref{th_main} we obtain Corollary \ref{coro_main}, which predicts information on the topology of compact domains with simply connected boundary in a complete manifold $M$. We would like to underline that, while Corollary \ref{coro_main} (i) is a simple generalization of the non--weighted results, the conclusion (ii) is new. In fact, in case $f\equiv 0$, condition (a) of Theorem \ref{th_main} is automatically satisfied, so that, since we can apply part (\textbf{I}) (or part (\textbf{II})) of Theorem \ref{th_main}, (ii) is trivially contained in (i). Accordingly, in order to prove Corollary \ref{coro_main} in the weighted setting, some further work is necessary. In particular we need the following result.

\begin{theorem}\label{th_B(ii)}
Let $M$ and $N$ be complete Riemannian manifolds, $u\in C^2(M,N)$ such that $Ddu=0$ and $\operatorname{rk}(u)\equiv1$. Then $u(M)\subseteq\gamma$, $\gamma$ geodesic of $N$ and
\begin{enumerate}
\item[(1)] if $\gamma$ is closed $u_{\sharp}(\pi_1(M, x_0))\leq\left\langle \left[\gamma\right]_{u(x_0)}\right\rangle\leq\pi_1(N,u(x_0))$;
\item[(2)] if $\gamma$ is not closed then $u_{\sharp}(\pi_1(M, x_0))=\mathbf{1}_{\pi_1(N, u(x_0))}$.
\end{enumerate}
\end{theorem}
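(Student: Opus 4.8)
The plan is to show that the image of $u$ lies in a single geodesic, and then analyze the fundamental group action separately according to whether that geodesic is closed or not.

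\medskip

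First I would establish the geometric picture. Since $Ddu=0$, the map $u$ is totally geodesic, hence in particular harmonic, and it maps geodesics of $M$ to geodesics of $N$ (reparametrized affinely). Because $\operatorname{rk}(u)\equiv 1$, at each point $x\in M$ the image $du(T_xM)$ is a line $\ell_x\subset T_{u(x)}N$. The parallelism $Ddu=0$ forces this distribution of lines to be parallel along $u$, so that the image $u(M)$ is a connected totally geodesic $1$--dimensional submanifold, i.e. (the image of) a single complete geodesic $\gamma$ of $N$. Concretely, fix $x_0\in M$, pick a unit vector $e\in T_{x_0}M$ with $du(e)\neq 0$, and let $\gamma$ be the geodesic of $N$ with $\gamma(0)=u(x_0)$ and $\dot\gamma(0) = du(e)/|du(e)|$. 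For any other point $x$, join it to $x_0$ by a path and use that $u$ maps it into $N$ with derivative always lying in the parallel line field; one checks $u(x)\in\gamma$. So $u$ factors as $u = \gamma\circ c$ for a map $c:M\to\rr$ (when $\gamma$ is not closed) or $c:M\to\rr/L\mathbb{Z}$ (when $\gamma$ is closed, of length $L$), where $c$ is harmonic into the model one--dimensional target.

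\medskip

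Next I would read off the consequences for $\pi_1$. Lift everything to universal covers: let $\widetilde{M}\to M$ be the universal cover and $\widetilde{c}:\widetilde M\to\rr$ the lift of $c$ (which exists since $\rr$ and $\rr/L\mathbb{Z}$ both have universal cover $\rr$). Then $\widetilde u = \widetilde\gamma\circ\widetilde c:\widetilde M\to\widetilde N$, where $\widetilde\gamma:\rr\to\widetilde N$ is the lifted geodesic in the universal cover of $N$. The induced homomorphism $u_\sharp:\pi_1(M,x_0)\to\pi_1(N,u(x_0))$ is described by how deck transformations of $\widetilde M$ translate $\widetilde c$: for $\alpha\in\pi_1(M,x_0)$ one has $\widetilde c(\alpha\cdot \tilde x) = \widetilde c(\tilde x) + \tau(\alpha)$ for some $\tau(\alpha)\in\rr$, because $\widetilde c(\alpha\cdot(\,\cdot\,)) - \widetilde c$ is a harmonic function on the connected manifold $\widetilde M$ whose differential equals the pullback of $dc$ minus itself, which vanishes; hence $\tau:\pi_1(M,x_0)\to\rr$ is a homomorphism. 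Then $u_\sharp(\alpha)$ is the element of $\pi_1(N,u(x_0))$ acting on $\widetilde N$ by the deck transformation that sends $\widetilde\gamma(s)$ to $\widetilde\gamma(s+\tau(\alpha))$ --- which is precisely a power of the loop $[\gamma]$ when $\gamma$ is closed, and must be trivial when $\gamma$ is not closed (since then the only deck transformation preserving $\widetilde\gamma$ as a set and acting by a translation in arclength is the identity, as a nontrivial such translation would make $\gamma$ periodic). This gives exactly conclusions (1) and (2): in the closed case $u_\sharp(\pi_1(M,x_0))\leq\langle[\gamma]_{u(x_0)}\rangle$, and in the non--closed case $u_\sharp(\pi_1(M,x_0))=\mathbf{1}$.

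\medskip

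The main obstacle I expect is the first paragraph: rigorously passing from the pointwise rank--one condition together with $Ddu=0$ to the global statement that $u(M)$ is contained in a single geodesic, i.e. controlling that the parallel line field $\ell_x$ integrates globally to one geodesic rather than merely locally to geodesic segments. The key tool is that a totally geodesic map sends (broken) geodesics to (broken) geodesics and that the unit speed direction $du(\dot\sigma)/|du(\dot\sigma)|$ along a geodesic $\sigma$ in $M$ is parallel in $N$ — so the image of every geodesic through $x_0$ on which $du$ is nonzero lies on $\gamma$, and one handles directions where $du$ vanishes by a continuity/connectedness argument (the set where $u$ agrees with a point of $\gamma$ is open and closed). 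Once this is in place, everything else is a clean deck--transformation bookkeeping argument, and the harmonic--function rigidity on the connected total space $\widetilde M$ (any globally defined harmonic function with zero differential is locally constant, hence the translation defect $\tau(\alpha)$ is well defined and additive) does the rest.
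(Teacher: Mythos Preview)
Your argument is correct, but it takes a different route from the paper's. The paper works loop by loop: given $\mathfrak g\in\pi_1(M,x_0)$, it first represents $\mathfrak g$ by a loop $\nu$ that is a constant--speed geodesic on $(0,1)$ (obtained by lifting a representative to $\widetilde M$, joining the endpoints by a geodesic, and projecting back). Since $Ddu=0$, the curve $u\circ\nu$ is then itself a constant--speed geodesic in $N$ lying on $\gamma$; analysing the endpoint velocities at $x_0$ shows that $u\circ\nu$ is either constant or a \emph{closed} geodesic, hence equal to $\gamma^{l}$ for some $l\in\mathbb Z$. This immediately gives (1) and, by contraposition, (2).

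Your approach instead sets up the global factorisation $u=\gamma\circ c$ and the translation homomorphism $\tau:\pi_1(M)\to\rr$, and then reads off $u_\sharp$ from the deck--transformation picture on $\widetilde N$. This is more structural and yields a transparent description of $u_\sharp$ as $\pi_1(M)\to L\mathbb Z\hookrightarrow\pi_1(N)$ (or the zero map), but it front--loads the work: one must honestly establish the factorisation through the abstract $1$--manifold $\rr$ or $\rr/L\mathbb Z$, not merely that $u(M)\subset\gamma(\rr)$ as a subset of $N$ (self--intersections of $\gamma$ make this nontrivial). The cleanest way is the one you hint at in the last paragraph: on $\widetilde M$ the rank--one line field $du(T_{\tilde x}\widetilde M)$ is orientable, so one may write $d(\widetilde u)=\omega\otimes V$ with $V$ a parallel unit section and $\omega$ a closed $1$--form, integrate $\omega=d\widetilde c$, and check $\widetilde u=\gamma\circ\widetilde c$ by an ODE/uniqueness argument along curves. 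Your invocation of ``harmonicity'' for the constancy of $\widetilde c\circ\alpha-\widetilde c$ is unnecessary---the differential already vanishes---but harmless. The paper's approach avoids all of this by never needing the global factorisation: it only uses that $u$ sends a single geodesic loop to a geodesic, which is immediate from $Ddu=0$.
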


\begin{proof}
Since $u$ is a totally geodesic map of rank $1$, by standard arguments we know that there exists a geodesic $\gamma$ of $N$ such that $u(M)\subset\gamma$. Without loss of generality, we can take a constant speed parametrization $\gamma:\rr\to N$. Fix an element $\mathfrak g\in\pi_1(M,x_0)$. In case $\gamma$ is closed, hence periodic so that up to a linear change of variable it can be reparametrized as $\gamma:\ttt\to N$, we want to show that
\begin{equation}\label{concl_2}
[u\circ\sigma]_{u(x_0)}=[\gamma]^l_{u(x_0)}
\end{equation}
for some $l\in\mathbb{Z}$ and for some (hence any) continuous loop $\sigma:[0,1]\to M$ based at $x_0$, i.e. $\sigma(0)=\sigma(1)=x_0$, such that $[\sigma]_{x_0}=\mathfrak g$. On the other hand, in case $\gamma:\rr\to N$ is not closed, i.e. non--periodic, the thesis corresponds to prove that
\begin{equation}\label{concl_1}
[u\circ\sigma]_{u(x_0)}=\mathbf{1}_{\pi_1(N,u(x_0))}.
\end{equation}
First, we show that we can choose the continuous loop $\sigma:[0,1]\to M$ based at $x_0$, $[\sigma]_{x_0}=\mathfrak g$, in such a way that $\sigma|_{(0,1)}$ is a constant speed geodesic. To this end, let $\tilde M$ be the universal cover of $M$ and $P_M:\tilde M\to M$ the covering projection. Fix an element $\tilde x_0$ in the fiber $P_M^{-1}(x_0)$. Then we can lift $\sigma$ to a continuous path $\tilde\sigma:[0,1]\to\tilde M$ which satisfies 
\[
P_M\circ\tilde\sigma(t) =\sigma(t),\qquad \tilde\sigma(0)=\tilde x_0,\qquad \tilde\sigma(1)=\tilde x_0'
\]
for all $t\in[0,1]$ and for some $\tilde x_0'\in P_M^{-1}(x_0)$ (possibly $\tilde x_0=\tilde x_0'$). Consider a constant speed (possibly constant) geodesic $\tilde\nu:[0,1]\to\tilde M$ joining $\tilde x_0$ to $\tilde x_0'$. Since $\tilde M$ is simply connected there exists a homotopy $\tilde H$ relative to $\{\tilde x_0,\tilde x_0'\}$ deforming $\tilde\sigma$ into $\tilde\nu$, i.e. $\tilde H\in C^{0}([0,1]^2,\tilde M)$ and $\tilde H(0,t)=\tilde\sigma(t)$, $\tilde H(1,t)=\tilde\nu(t)$, $\tilde H(s,0)=\tilde x_0$ and $\tilde H(s,1)=\tilde x_0'$ for all $s,t\in [0,1]$. Hence we can project $\tilde H$ to the homotopy $H:=P_M\circ\tilde H\in C^{0}([0,1]^2,M)$, so that the loop $\nu$ based at $x_0$ defined by $\nu(t):=H(1,t)=P_M(\tilde\nu(t))$ satisfies $[\nu]_{x_0}=[\sigma]_{x_0}=\mathfrak g$ and, since covering projection maps are local isometries, $\nu|_{(0,1)}$ is a geodesic.

Now, if $u\circ\nu$ is constant, then clearly $[u\circ\nu]_{u(x_0)}=\mathbf{1}_{\pi_1(N,u(x_0))}$, so that $u_\sharp([\nu]_x)=\mathbf{1}_{\pi_1(N,u(x))}$. On the other hand, suppose that $u\circ\nu\in C^{0}([0,1],N)$ is not constant. Then, since $Ddu=0$, $u\circ\nu$ is a non--trivial constant speed geodesic arc in $N$, i.e. ${}^N\nabla_{du(\dot\nu)}du(\dot\nu)\equiv 0$, and $u\circ\nu\subset\gamma$. Since $\nu(0)=\nu(1)$, $\nu$ can be seen as a continuous function on $\ttt=[0,1]/\sim$ smooth in $(0,1)$. Moreover, since $Ddu=0$ and $\operatorname{rk}(u)\equiv 1$, we have that 
\[
du(\dot\nu(1)) \parallel du(\dot\nu(0))\qquad\textrm{and}\qquad |du(\dot\nu(1))|=|du(\dot\nu(0))| \neq 0
\]
in $T_{u(x_0)}N$, and, since $u\circ\nu|_{(0,1)}$ is a non--trivial geodesic of $N$, necessarily it is
\[
du(\dot\nu(1)) = du(\dot\nu(0)).
\]
Then both $\gamma$ and $u\circ\nu$ are non--trivial closed geodesic of $N$, and $u\circ\nu=\gamma^l$ (i.e. $\gamma$ covered $l$ times) for some $l\in\mathbb{Z}$. In particular we get $u_\sharp([\nu]_x)= [\gamma]^l_{u(x)}$.

Since we have arbitrarily chosen the element $\mathfrak g\in \pi_1(M,x_0)$, this permits to conclude.
\end{proof}

As a consequence of Theorem \ref{th_B(ii)} and Theorem \ref{th_main} we obtain the desired topological result.

\begin{proof}[Proof (of Corollary \ref{coro_main})]
We give just an idea of the proof. For the details see the proof of Theorem 6.21 in \cite{PRS-Book}.

Consider a homomorphism $\sigma\in\operatorname{Hom}(\pi_1(D),\pi_1(N))$. Since $N$ is $K(\pi,1)$, according to the theory of aspherical spaces there exists a map $\hat u:D\to N$ such that $\sigma=\alpha\circ\hat u_{\sharp}$ for some automorphism $\alpha\in \operatorname{Aut}(\pi_1(N))$. Since $\partial D$ is simply connected, $\hat u$ can be extended to a map $u:M\to N$ such that $u|_{M\setminus D'}$ is constant for some compact set $D\subset\subset D'\subset\subset M$. Then $u$ has finite $f$--energy and we can apply Theorem \ref{th_main}. First, if the assumptions in (\textbf{I}) or (\textbf{II}) are satisfied, we deduce that $u$ is homotopic to a constant and accordingly the homomorphism $\sigma$ is trivial. On the other hand, if $\nsect<0$, then Theorem \ref{th_main} says that $u$ is a totally geodesic map of $\operatorname{rk}(u)\equiv 1$ so that $u(M)$ is contained in some geodesic $\gamma$ of $N$. Hence, an application of Theorem \ref{th_B(ii)} yields that, for some $x_0$ in $D$, 
\begin{align*}
\hat u_{\sharp}(\pi_1(D,x_0))&= u_{\sharp}(\pi_1(D,x_0))< u_{\sharp}(\pi_1(M,x_0))\\
&< \left\langle \left[\gamma\right]_{u(x_0)}\right\rangle<\pi_1(N,u(x_0)).
\end{align*}
\end{proof}

\begin{remark}\label{Remark 5.3}
\rm{
When $M$ is compact and (d) holds or $M$ is complete non--compact and (a) is satisfied, the conclusion of Corollary \ref{coro_main} (i) follows from Proposition \ref{th_knowntopo} (i) and (iii). Indeed, let $\rho:\pi_1(M)\to\pi_1(N)$ be  a non--trivial homomorphism of $\pi_1(M)$ into the fundamental group of a compact manifold $N$ with $\nsect\leq0$. Then every $\mathfrak g \in \pi_1(M)$, and hence $\rho(\mathfrak g)$, must have finite order. On the other hand by Cartan theorem all non--trivial elements of the fundamental group of a complete Riemannian manifold of non--positive curvature have infinite order. Contradiction.

}\end{remark}

Let us now consider the case where $(M^m, g_{M}, e^{-f}dV_M)$ supports an expanding gradient Ricci soliton structure. Very recently in \cite{MW2} it has been shown that a non--trivial expanding Ricci soliton must be connected at infinity provided its scalar curvature satisfies a suitable lower bound. Starting from Corollary \ref{coro_main} we can go a step further in the understanding of the topology of this class of manifolds. Indeed, in \cite{MW2} the following lemma is proved. Letting $^MS$ denote the scalar curvature of $M$, recall that by the scalar curvature estimates proved for expanding Ricci solitons, $m\lambda\leq \inf_M\,{}^MS\leq 0$ and ${}^MS(x)>m\lambda$, unless $M$ is Einstein and the soliton is trivial (i.e. $f$ is constant); see \cite{PRiS}.
\begin{lemma}[Lemma 5.3 in \cite{MW2}]\label{LemmaMuntWang}
Let $(M^m, g_{M}, \nabla f)$ be a complete non--trivial expanding gradient Ricci soliton. Define $\rho:={}^MS-m\lambda$. Then $\rho>0$ and 
\begin{equation*}
\int_M\rho\varphi^2e^{-f}dV_M\leq\int_M|\nabla\varphi|^2e^{-f}dV_M,
\end{equation*}
for any $\varphi\in C_0^{\infty}$. In particular $\lambda_1(-\Delta_f-\rho)\geq 0$.
\end{lemma}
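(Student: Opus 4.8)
The statement to prove is Lemma~\ref{LemmaMuntWang}: for a complete non--trivial expanding gradient Ricci soliton $(M^m, g_M, \nabla f)$ with $\ricc_f = \lambda g_M$, $\lambda<0$, the quantity $\rho = {}^MS - m\lambda$ is strictly positive and satisfies the Poincar\'e--type inequality $\int_M \rho\varphi^2 e^{-f}dV_M \le \int_M |\nabla\varphi|^2 e^{-f}dV_M$ for all $\varphi\in C^\infty_c(M)$. The plan is to exploit the classical soliton identities relating $f$, ${}^MS$ and $|\nabla f|^2$, and then to feed the resulting ODE/elliptic information into an integration by parts against $e^{-f}$.

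First I would record the standard structural equations for a gradient Ricci soliton normalized as $\ricc + \Hess f = \lambda g_M$: taking traces gives ${}^MS + \Delta f = m\lambda$, and the contracted second Bianchi identity yields ${}^MS + |\nabla f|^2 - 2\lambda f = \text{const}$, so that after adjusting $f$ by an additive constant one may assume ${}^MS + |\nabla f|^2 = 2\lambda f$. Combining these two, $\rho = {}^MS - m\lambda = -\Delta f$, hence $\Delta_f \rho$ and $\nabla\rho$ can be expressed through $f$; alternatively, differentiating the soliton equation gives the evolution-type identity $\Delta_f {}^MS = 2\lambda {}^MS - 2|\ricc|^2$ (the weighted Bochner/Hamilton identity for the scalar curvature of a soliton), which translates into $\Delta_f \rho = 2\lambda \rho - 2|\ricc - \lambda g_M|^2 + (\text{lower order})$; the precise bookkeeping is routine but needs care with the $\lambda\neq0$ terms. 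The strict positivity $\rho>0$ then follows from the strong maximum principle applied to the $f$--Laplacian together with the scalar curvature lower bound ${}^MS \ge m\lambda$ (cited from \cite{PRiS}): $\rho\ge 0$ everywhere, and if $\rho$ vanished at an interior point it would vanish identically, forcing $M$ Einstein and the soliton trivial, contrary to hypothesis.

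For the integral inequality, the key is that $\rho = -\Delta f = \Delta_f f - |\nabla f|^2$ — wait, more usefully $-\Delta f = \rho$ directly — so for $\varphi\in C^\infty_c(M)$ I would compute
\[
\int_M \rho\,\varphi^2 e^{-f}\,dV_M = -\int_M (\Delta f)\varphi^2 e^{-f}\,dV_M = \int_M \langle \nabla f, \nabla(\varphi^2 e^{-f})\rangle\,dV_M
\]
and expand the right-hand side. This produces a term $-\int_M |\nabla f|^2 \varphi^2 e^{-f}$ together with $\int_M 2\varphi\langle\nabla f,\nabla\varphi\rangle e^{-f}$. Applying Cauchy--Schwarz/Young to the cross term, $2\varphi\langle\nabla f,\nabla\varphi\rangle \le |\nabla f|^2\varphi^2 + |\nabla\varphi|^2$, the unfavorable $|\nabla f|^2\varphi^2$ contributions cancel and one is left with exactly $\int_M |\nabla\varphi|^2 e^{-f}\,dV_M$, which is the claimed bound; the spectral statement $\lambda_1(-\Delta_f - \rho)\ge 0$ is then immediate from the Rayleigh characterization as in Remark~\ref{rmk_rai}.

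The main obstacle I anticipate is not the final integration by parts — which is essentially a one-line computation once set up — but justifying the two soliton identities cleanly in the normalization used, and in particular making sure the additive constant in $f$ is fixed consistently so that ${}^MS - m\lambda = -\Delta f$ holds with no stray constants; this is where sign errors and factor-of-$\lambda$ slips creep in. A secondary point requiring a word of care is that all the integrals above are finite and the integrations by parts are legitimate: since $\varphi$ has compact support there is no boundary term and no convergence issue, so this is harmless, but it should be stated. Everything else is standard soliton calculus plus the cited scalar curvature estimate.
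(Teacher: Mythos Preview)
The paper does not prove this lemma at all; it is explicitly attributed to Munteanu--Wang \cite{MW2} and only stated, with the positivity $\rho>0$ referred back to the scalar curvature estimate from \cite{PRiS} in the paragraph preceding the lemma. So there is no ``paper's own proof'' to compare against.

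That said, your argument is correct and is essentially the standard one. A couple of comments to tighten it. First, your worry about the additive constant in $f$ is misplaced: the identity $\rho={}^MS-m\lambda=-\Delta f$ comes straight from tracing $\ricc+\Hess f=\lambda g_M$ and is insensitive to any normalization of $f$; the additive constant only enters the Hamilton identity ${}^MS+|\nabla f|^2-2\lambda f=\text{const}$, which you never actually use. Second, the detour through $\Delta_f{}^MS=2\lambda\,{}^MS-2|\ricc|^2$ is unnecessary for either conclusion --- the positivity $\rho>0$ is exactly the cited result from \cite{PRiS} (strict inequality since the soliton is non--trivial), and the integral inequality needs only $\rho=-\Delta f$. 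Once you have that, your computation
\[
\int_M\rho\,\varphi^2 e^{-f}\,dV_M=-\int_M(\Delta f)\varphi^2 e^{-f}\,dV_M=\int_M 2\varphi\langle\nabla f,\nabla\varphi\rangle e^{-f}\,dV_M-\int_M|\nabla f|^2\varphi^2 e^{-f}\,dV_M
\]
followed by Young on the cross term is exactly right, with the $|\nabla f|^2\varphi^2$ contributions cancelling to leave $\int_M|\nabla\varphi|^2 e^{-f}\,dV_M$. The compact support of $\varphi$ indeed makes all integrations by parts unproblematic.
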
 

\begin{remark}
\rm{As a consequence of Lemma \ref{LemmaMuntWang} and Corollary 1.7 in \cite{PRRS} we observe incidentally that for any complete non--trivial expanding Ricci soliton with scalar curvature such that ${}^MS-m\lambda>c>0$ the quantity $\int_{B_r}|\nabla f|^pe^{-f}dV_M$ has to grow at least quadratically, for any $p>1$. }
\end{remark}

The lower bound for the bottom of the spectrum of $\Delta_f$ obtained in Lemma \ref{LemmaMuntWang} permits to obtain conditions on the scalar curvature of an expanding gradient Ricci soliton in order to guarantee condition \eqref{f-spect} for some $H>1$, where $k\not\equiv 0$, and hence to apply Corollary \ref{coro_main} under the assumptions in (\textbf{II}). 
\begin{proof} [Proof of Theorem \ref{th_exp}] 
If $\inf_M {}^MS>(m-1)\lambda$, then the proof is easy. In fact in this case one can choose $H>1$ such that
\begin{equation}\label{spect_MuntWang}
\int_M ({}^MS-m\lambda)\varphi^2 e^{-f}dV_M\geq-\int_MH\lambda\varphi^2e^{-f}dV_M
\end{equation}
for any $\varphi\in C_0^{\infty}$. By Lemma \ref{LemmaMuntWang} this permits to deduce \eqref{ray} with $k^2=-\lambda$.

In general, let us remark that \eqref{spect_MuntWang} is used in the proof of Corollary \ref{coro_main} (see also Theorem \ref{f-SY_spectr}) with the choices $\varphi^2=\phi^2\rho^2$, where $\phi^2=|dv|^2$ is the energy density of a finite $f$-energy $f$-harmonic map, and $\rho=\rho_R$ are a sequence of cut-off functions, increasing in $R$ and converging pointwise to 1 as $R\to\infty$.
Observe that since ${}^MS>(m-1)\lambda$, $\lambda$ is constant and $\int_M\phi^2e^{-f}dV_M<+\infty$, then there exists an $H'>1$ such that
\begin{equation*}
\int_M ({}^MS-m\lambda)\phi^2 e^{-f}dV_M\geq-\int_MH'\lambda\phi^2e^{-f}dV_M.
\end{equation*}
Fix $1<H<H'$, then by monotone convergence, there exists $\bar R>0$ such that 
\begin{equation*}
\int_M ({}^MS-m\lambda)\rho_R^2\phi^2 e^{-f}dV_M\geq-\int_MH\lambda\rho_R^2\phi^2e^{-f}dV_M
\end{equation*}
for any $R>\bar R$. Accordingly, with this choice for $H>1$, we can repeat the proof of Corollary \ref{coro_main} to obtain Theorem \ref{th_exp}.
\end{proof}

\section*{Acknowledgements}
We thank the anonymous referee for useful suggestions that greatly helped to improve the exposition.
\bibliographystyle{amsplain}
\bibliography{Bibliography_f-harm}                                     
\end{document}